\pgfplotsset{compat=newest}
\pgfplotsset{plot coordinates/math parser=false}
\newcommand{\transfertensorCol}[1]
{
\begin{tikzpicture}[scale=1.2]
\pgfmathsetmacro{\cubex}{0.4}
\pgfmathsetmacro{\cubey}{0.4}
\pgfmathsetmacro{\cubez}{0.4}
\draw[black,fill=#1] (0,0,0) -- ++(-\cubex,0,0) -- ++(0,-\cubey,0) -- ++(\cubex,0,0) -- cycle;
\draw[black,fill=#1] (0,0,0) -- ++(0,0,-\cubez) -- ++(0,-\cubey,0) -- ++(0,0,\cubez) -- cycle;
\draw[black,fill=#1] (0,0,0) -- ++(-\cubex,0,0) -- ++(0,0,-\cubez) -- ++(\cubex,0,0) -- cycle;
\node at (0.7,-0.1) {$^{k\times k\times k}$};
\node at (-1.3,-0.1) {};
\end{tikzpicture}
}
\newcommand{\transfertensorAdd}[2]
{
\begin{tikzpicture}[scale=1.2]
\pgfmathsetmacro{\cubex}{0.4}
\pgfmathsetmacro{\cubey}{0.4}
\pgfmathsetmacro{\cubez}{0.4}
\pgfmathsetmacro{\bigcubex}{2*\cubex}
\pgfmathsetmacro{\bigcubey}{2*\cubey}
\pgfmathsetmacro{\bigcubez}{2*\cubez}
\draw[black] (\cubex,-\bigcubey,-\bigcubez) -- (-\cubex,-\bigcubey,-\bigcubez) -- (-\cubex,-\bigcubey,0);
\draw[black] (-\cubex,-\bigcubey,-\bigcubez) -- (-\cubex,0,-\bigcubez) -- (\cubex,0,-\bigcubez) -- (\cubex,-\cubey,-\bigcubez);
\draw[black,fill=#1] (0,0,0) -- ++(-\cubex,0,0) -- ++(0,-\cubey,0) -- ++(\cubex,0,0) -- cycle;
\draw[black,fill=#1] (0,0,0) -- ++(0,0,-\cubez) -- ++(0,-\cubey,0) -- ++(0,0,\cubez) -- cycle;
\draw[black,fill=#1] (0,0,0) -- ++(-\cubex,0,0) -- ++(0,0,-\cubez) -- ++(\cubex,0,0) -- cycle;
\draw[black,fill=#2] (\cubex,-\cubey,-\cubez) -- ++(-\cubex,0,0) -- ++(0,-\cubey,0) -- ++(\cubex,0,0) -- cycle;
\draw[black,fill=#2] (\cubex,-\cubey,-\cubez) -- ++(0,0,-\cubez) -- ++(0,-\cubey,0) -- ++(0,0,\cubez) -- cycle;
\draw[black,fill=#2] (\cubex,-\cubey,-\cubez) -- ++(-\cubex,0,0) -- ++(0,0,-\cubez) -- ++(\cubex,0,0) -- cycle;
\draw[black] (0,0,0) -- (\cubex,0,0) -- (\cubex,-\bigcubey,0) -- (-\cubex,-\bigcubey,0) -- (-\cubex,-\cubey,0);
\draw[black] (\cubex,-\bigcubey,0) -- (\cubex,-\bigcubey,-\cubez);
\draw[black] (\cubex,0,0) -- (\cubex,0,-\bigcubez);
\draw[black] (-\cubex,0,-\cubez) -- (-\cubex,0,-\bigcubez);
\node at (1.4,-0.2) {$^{2k\times 2k\times 2k}$};
\node at (-2,-0.1) {};
\end{tikzpicture}
}
\newcommand{\roottransfertensor}
{
\begin{tikzpicture}[scale=1.2]
\pgfmathsetmacro{\cubex}{0.4}
\pgfmathsetmacro{\cubey}{0.4}
\draw[black] (0,0,0) -- ++(-\cubex,0,0) -- ++(0,-\cubey,0) -- ++(\cubex,0,0) -- cycle;
\node at (-0.55,-0.2) {$^{k}$};
\node at (-0.2,0.1) {$^{k}$};
\node at (0.3,-0.2) {};
\end{tikzpicture}
}
\newcommand{\roottransfertensorCol}[1]
{
\begin{tikzpicture}[scale=1.2]
\pgfmathsetmacro{\cubex}{0.4}
\pgfmathsetmacro{\cubey}{0.4}
\draw[black,fill=#1] (0,0,0) -- ++(-\cubex,0,0) -- ++(0,-\cubey,0) -- ++(\cubex,0,0) -- cycle;
\node at (-0.55,-0.2) {$^{k}$};
\node at (-0.2,0.1) {$^{k}$};
\node at (0.3,-0.2) {};
\end{tikzpicture}
}
\newcommand{\roottransfertensorAdd}[2]
{
\begin{tikzpicture}[scale=1.2]
\pgfmathsetmacro{\cubex}{0.4}
\pgfmathsetmacro{\cubey}{0.4}
\draw[black,fill=#1] (0,0,0) -- ++(-\cubex,0,0) -- ++(0,-\cubey,0) -- ++(\cubex,0,0) -- cycle;
\draw[black,fill=#2] (\cubex,-\cubey,0) -- ++(-\cubex,0,0) -- ++(0,-\cubey,0) -- ++(\cubex,0,0) -- cycle;
\draw[black] (\cubex,0,0) -- ++(-\cubex,0,0) -- ++(0,-\cubey,0) -- ++(\cubex,0,0) -- cycle;
\draw[black] (0,-\cubey,0) -- ++(-\cubex,0,0) -- ++(0,-\cubey,0) -- ++(\cubex,0,0) -- cycle;
\node at (-0.6,-0.4) {$^{2k}$};
\node at (0,0.1) {$^{2k}$};
\node at (0.3,-0.2) {};
\end{tikzpicture}
}
\newcommand{\leafframeCol}[1]
{
\begin{tikzpicture}[scale=1.2]
\pgfmathsetmacro{\cubex}{0.4}
\pgfmathsetmacro{\cubey}{0.9}
\draw[black,fill=#1] (0,0,0) -- ++(-\cubex,0,0) -- ++(0,-\cubey,0) -- ++(\cubex,0,0) -- cycle;
\node at (-0.2,-1.1) {$^{n\times k}$};
\end{tikzpicture}
}
\newcommand{\leafframeAdd}[2]
{
\begin{tikzpicture}[scale=1.2]
\pgfmathsetmacro{\cubex}{0.4}
\pgfmathsetmacro{\cubey}{0.9}
\draw[black,fill=#1] (0,0,0) -- ++(-\cubex,0,0) -- ++(0,-\cubey,0) -- ++(\cubex,0,0) -- cycle;
\draw[black,fill=#2] (\cubex,0,0) -- ++(-\cubex,0,0) -- ++(0,-\cubey,0) -- ++(\cubex,0,0) -- cycle;
\node at (0,-1.2) {$^{n\times 2k}$};
\end{tikzpicture}
}
\newcommand{\leafframeindexCol}[3]
{
\begin{tikzpicture}[scale=1.2]
\pgfmathsetmacro{\cubex}{0.4}
\pgfmathsetmacro{\cubey}{0.9}
\draw[black,fill=#1] (0,0,0) -- ++(-\cubex,0,0) -- ++(0,-\cubey,0) -- ++(\cubex,0,0) -- cycle;
\node at (-0.2,-1.1) {$^{n\times k}$};
\draw[fill=#2] (-0.4,-0.1*#3 -0.1) rectangle (0,-0.1*#3);
\end{tikzpicture}
}
\newcommand{\transfertensormultresultCol}[4]
{
\begin{tikzpicture}[scale=1.2]
\pgfmathsetmacro{\cubex}{0.4}
\pgfmathsetmacro{\cubey}{0.4}
\pgfmathsetmacro{\cubez}{0.4}
\draw[black,fill=#1] (0,0,0) -- ++(-\cubex,0,0) -- ++(0,-\cubey,0) -- ++(\cubex,0,0) -- cycle;
\draw[black,fill=#1] (0,0,0) -- ++(0,0,-\cubez) -- ++(0,-\cubey,0) -- ++(0,0,\cubez) -- cycle;
\draw[black,fill=#1] (0,0,0) -- ++(-\cubex,0,0) -- ++(0,0,-\cubez) -- ++(\cubex,0,0) -- cycle;
\draw[fill=#3] (0.25,-0.25) rectangle (0.35,0.15);
\draw[fill=#2] (-0.85,-0.2) rectangle (-0.45,-0.1);
\node at (0.5,-0.1) {$=$};
\pgfmathsetmacro{\smallcubex}{0.1}
\pgfmathsetmacro{\smallcubey}{0.1}
\draw[black,fill=#4] (0.77,-0.1,0.1) -- ++(-\smallcubex,0,0) -- ++(0,-\smallcubey,0) -- ++(\smallcubex,0,0) -- cycle;
\draw[black,fill=#4] (0.77,-0.1,0.1) -- ++(0,0,-\cubez) -- ++(0,-\smallcubey,0) -- ++(0,0,\cubez) -- cycle;
\draw[black,fill=#4] (0.77,-0.1,0.1) -- ++(-\smallcubex,0,0) -- ++(0,0,-\cubez) -- ++(\smallcubex,0,0) -- cycle;
\end{tikzpicture}
}
\newcommand{\roottransfertensormultCol}[4]
{
\begin{tikzpicture}[scale=1.2]
\pgfmathsetmacro{\cubex}{0.4}
\pgfmathsetmacro{\cubey}{0.4}
\draw[black,fill=#1] (0,0,0) -- ++(-\cubex,0,0) -- ++(0,-\cubey,0) -- ++(\cubex,0,0) -- cycle;
\draw[fill=#3] (0.1,-0.4) rectangle (0.2,0);
\draw[fill=#2] (-0.85,-0.2) rectangle (-0.45,-0.1);
\node at (1.3,-0.2) {$=$\,#4};
\end{tikzpicture}
}
\newtheorem{remark}[theorem]{Remark}
 \newdimen\dummy
\newcommand{\range}{\mathop{\rm range}}
\newcommand{\mytimes}{\mathop{\mbox{\Large$\times$}}}
\DeclareMathOperator{\cond}{cond}
\begin{document}
\title{Distributed Hierarchical SVD in the Hierarchical Tucker Format}
\author{
  Lars Grasedyck\thanks{Institut f\"ur Geometrie und Praktische Mathematik, 
    RWTH Aachen, Templergraben 55, 52056 Aachen, Germany. 
    Email: {\tt \{lgr,loebbert\}@igpm.rwth-aachen.de}. The authors gratefully acknowledge support by the
    DFG priority programme 1648 (SPPEXA) under grant GR-3179/4-2.
  }
  \and
  Christian L\"obbert$^*$
}

\maketitle
\sloppy
\begin{abstract}
  We consider tensors in the Hierarchical Tucker format and suppose the tensor data to be distributed 
  among several compute nodes. We assume the compute nodes to be in a one-to-one correspondence with the
  nodes of the Hierarchical Tucker format such that connected nodes can communicate with each other.
  An appropriate tree structure in the Hierarchical Tucker format then allows for the parallelization of basic arithmetic
  operations between tensors
  with a parallel runtime which grows like $\log(d)$, where $d$ is the tensor dimension.
  We introduce parallel algorithms for several tensor operations, some of which can be applied to 
  solve linear equations $\mathcal{A}X=B$ directly in the Hierarchical Tucker format 
  using iterative methods like
  conjugate gradients or multigrid. We present weak scaling studies, which provide evidence that the runtime
  of our algorithms indeed grows like $\log(d)$. Furthermore, we present 
  numerical experiments in which we apply our algorithms to solve a parameter-dependent diffusion equation
  in the Hierarchical Tucker format by means of a multigrid algorithm.

  Keywords: Hierarchical Tucker, HT, Multigrid, Parallel algorithms, SVD, Tensor arithmetic

\end{abstract}

\section{Introduction}\label{Sec:intro}
High dimensional tensors may e.g. arise in the context of parameter-dependent problems. 
Consider a linear equation
\begin{equation*}
A(p_1,\, p_2, \ldots, \, p_d)\cdot \vec{x}(p_1,\,p_2,\ldots ,\, p_d ) = \vec{b}(p_1,\, p_2, \,\ldots ,\, p_d),
\end{equation*}
where the matrix $A$, the right-hand side $\vec{b}$, and with that also the solution $\vec{x}$,
depend on a (possibly high) number of parameters.
Then the solution vector $\vec{x}$ can be regarded as a tensor $X$ (i.e. a multidimensional array) of dimension 
$d+1$, where we have one tensor dimension for each of the $d$ parameters plus the vector dimension of $\vec{x}$:
\begin{equation*}
x_i(p_1,\,p_2,\,\ldots ,\, p_d)\quad = \quad X( p_1,\, p_2,\,\ldots ,\,p_d,\, i).
\end{equation*}
In the same way, the right-hand side $\vec{b}$ and the matrix $A$ can be regarded as $(d+1)$-dimensional 
tensors $B$ and $\mathcal{A}$, where the two vector indices $i, j$ of the matrix $A$ are combined to one index
$(i,j)$:
\begin{equation}\label{equation:operatorTensor}
A_{ij}(p_1,\, p_2,\,\ldots ,\, p_d )\quad = \quad \mathcal{A}(p_1,\,p_2,\, \ldots ,\, p_d,\,(i,j)).
\end{equation}
If low-rank representations/approximations of the tensors $\mathcal{A}$ and $B$ are available 
and if basic arithmetic operations
can directly be performed in the underlying low-rank format, then an approximate solution $X$ 
of the tensor equation $\mathcal{A}X=B$ can be
obtained by applying some iterative method directly in the low-rank format.
The solution tensor $X$ will then contain all solutions $\vec{x}(p_1,p_2,\ldots ,p_d)$ for any
parameter combination and the solutions for single parameter combinations can easily be extracted from $X$.
Furthermore several postprocessing operations like computing the mean over (all) parameter combinations 
or computing expected
values with respect to a probability distribution of the parameters can directly be carried out for the solution $X$
in the low-rank format.

In this article we present parallel algorithms for basic arithmetic operations on tensors, where we
choose the Hierarchical Tucker format as low-rank format for tensors.
The Hierarchical Tucker format is briefly introduced in 
Section~\ref{section:distributedtensors}. 
For a more detailed description of low-rank tensor approximation techniques 
we refer to the
literature survey \cite{GrKrTo13}.
For our algorithms the tensors may be stored distributed over
several compute nodes. As a consequence the algorithms can also be used for postprocessing operations on 
tensors which stem from a parallel sampling method on distinct compute nodes, as e.g. described in 
\cite{GrKriLoe15}.

In Section~\ref{section:arithmetic} we give an overview of our parallel algorithms for 
the Hierarchical Tucker format. The Hierarchical
Tucker format is based on a binary tree (cf. Section~\ref{section:distributedtensors}, 
Fig.~\ref{figure:hierarchy}). For this article we assume the data of each tree node to be
stored on its own compute node. Then our algorithms typically require communication between
compute nodes which are neighbors with respect to the binary tree. 
Assuming the number of dimensions $d$ to be a power of two, $d = 2^p$, $p\in\mathbb{N}$, and
choosing the tree $T_d$ such that the number of tree levels is minimized, our algorithms can
run in parallel on all compute nodes of the same tree level. 
In Section~\ref{section:arithmetic} we refer to this as \textit{level-wise parallelization}.
Since we have $p+1 = \log_2(d) + 1$ tree levels, we expect the parallel runtime of our 
algorithms to grow like $\log(d)$.

In Section~\ref{section:runtime} we present runtime tests which provide evidence that the parallel
runtime of our algorithms indeed grows like $\log(d)$ for arithmetic operations between
tensors of dimension $d$.

In Section~\ref{Sec:multigrid} we apply our algorithms to solve a parameter-dependent toy
problem in the Hierarchical Tucker format by means of iterative methods. We chose a diffusion
equation with piecewise constant diffusion coefficients which are controlled by 9 parameters.
Applying a Finite Element discretization we obtain a linear system, the matrix of which depends
on the 9 parameters. Due to its affine structure with respect to the parameters, 
the system matrix can directly be
represented as linear operator in the Hierarchical Tucker format, i.e. we can use iterative
methods to solve the linear equation, 
where all arithmetic operations are carried out
between tensors in the Hierarchical Tucker format.
We illustrate this for a multigrid method containing a Richardson method as smoother.
On the coarsest grid level we use a CG method to solve the defect equation. This numerical
experiment shows less the benefit of the parallelization 
(which is demonstrated in Section~\ref{section:runtime}) but rather confirms that our algorithms
can in principle be used together with iterative methods.

\section{Distributed tensors in the Hierarchical Tucker format}\label{section:distributedtensors}
We consider real-valued tensors $A\in\mathbb{R}^{\mathcal{I}}$ for some product index set 
$\mathcal{I} = \mathcal{I}_1\times\cdots\times\mathcal{I}_d$, where the $\mathcal{I}_{\mu}$, $\mu = 1,\ldots ,d$,
are index sets of size $n_{\mu}\in\mathbb{N}$, e.g. $\mathcal{I}_{\mu} = \{1,\ldots ,n_{\mu}\}$.
The number $d\in\mathbb{N}$ is referred to as \textit{dimension} of the tensor.
A real-valued tensor is thus a mapping $A\,:\, \mathcal{I}\to\mathbb{R}$ from a product index set
$\mathcal{I}$ to the set $\mathbb{R}$ of real numbers.

Assuming the index sets $\mathcal{I}_{\mu}$ to be all of the same size $n:=n_{\mu}$, $\mu = 1,\ldots ,d$, a tensor 
$A\in\mathbb{R}^{\mathcal{I}}$ consists of $n^d$ entries and may be regarded as a vector in $\mathbb{R}^{n^d}\cong \mathbb{R}^{\mathcal{I}}$. To emphasize the tensor structure, we prefer the notation
$A\in\mathbb{R}^{\mathcal{I}}$, $\mathcal{I} = \mathcal{I}_1\times\cdots\times \mathcal{I}_d$,
instead of $A\in\mathbb{R}^{n^d}$.

Even for moderate numbers $n$ and $d$ the exponential 
growth of the number $n^d$ of tensor entries makes it impossible to store the tensor entry-by-entry, not to mention the costs for arithmetic operations.
This \textit{curse of dimensionality} led to a variety of different tensor formats which make use of low rank structures
in the tensors.

In this article the \textit{Hierarchical Tucker format} \cite{GrKrTo13,Gr10,Ha12} is used, which is based on a hierarchy of the
tensor dimensions $\{1,\ldots ,d\}$, as depicted in Fig~\ref{figure:hierarchy} for $d=8$.
\begin{figure}
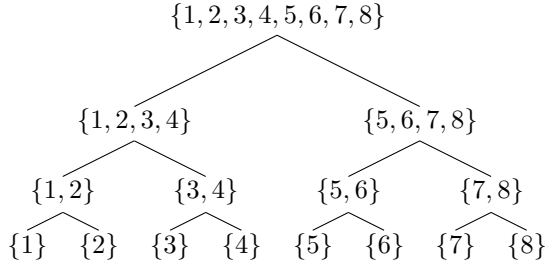

\Tree 
[.$\{1,2,3,4,5,6,7,8\}$
    [.$\{1,2,3,4\}$
        [.$\{1,2\}$
            $\{1\}$ $\{2\}$
        ].$\{1,2\}$
        [.$\{3,4\}$
            $\{3\}$ $\{4\}$
        ].$\{3,4\}$
    ].$\{1,2,3,4\}$
    [.$\{5,6,7,8\}$
        [.$\{5,6\}$
            $\{5\}$ $\{6\}$
        ].$\{5,6\}$
        [.$\{7,8\}$
            $\{7\}$ $\{8\}$
        ].$\{7,8\}$
    ].$\{5,6,7,8\}$
].$\{1,2,3,4,5,6,7,8\}$
\caption{Balanced dimension tree for dimension $d=8$.}
\label{figure:hierarchy}
\end{figure}
The \textit{dimension tree} of Fig.~\ref{figure:hierarchy} is denoted by $T_{8}$, since $d=8$. Of course one could think of
many different ways how to split a node $\{\mu,\ldots ,\nu\}$ of the tree $T_{d}$ into its two sons. However
we will always assume a \textit{balanced} decomposition where $\{\mu,\ldots ,\nu\}$ is subdivided into the two sons
\begin{equation}\label{equation:balanced}
\biggl\{ \mu\,,\; \ldots \,,\; \biggl\lfloor \frac{\mu + \nu -1}{2} \biggr\rfloor \biggr\}\qquad\mbox{and}\qquad 
\biggl\{ \biggl\lfloor \frac{\mu + \nu -1}{2} \biggr\rfloor + 1\, ,\;\ldots \,,\;\nu \biggr\}.
\end{equation}
This \textit{balanced tree} minimizes the number of tree levels: $\mathrm{depth}(T_{d}) = \lceil\log_2(d)\rceil$, whereas a subdivision
of $\{\mu,\ldots ,\nu\}$ into $\{\mu\}$ and $\{\mu +1,\ldots ,\nu\}$ would result in $\mathrm{depth}(T_{d}) = d-1$.
Since our algorithms will allow for level-wise parallelization, the balanced tree appears to be the best choice here.

For simplicity we assume the dimension $d$ to be a power of two: $d = 2^{p}$ for some $p\in\mathbb{N}$, 
i.e. the leaves $\{\mu\}$, 
$\mu = 1,\ldots ,d$, are all on the highest tree level and we always have $2^{\ell}$ tree nodes on level $\ell$, where
level 0 is the root and level $p$ is the highest level.

\subsection{The Hierarchical Tucker format}\label{subsection:htuckerformat}
Consider a tensor $A\in\mathbb{R}^{\mathcal{I}}$ for a product index set $\mathcal{I}$ of dimension $d$, as above. 
Then for a subset
$t\subset \{1,\ldots ,d\}$ let $[t] = \{1,\ldots ,d\}\setminus t$ denote the complement of $t$
and we define the product index sets $\mathcal{I}_t$ and $\mathcal{I}_{[t]}$ by
\begin{equation*}
\mathcal{I}_t = \mytimes_{\mu\in t}\mathcal{I}_{\mu}\qquad\mbox{and}\qquad 
\mathcal{I}_{[t]} = \mytimes_{\mu\in [t]}\mathcal{I}_{\mu}.
\end{equation*}
\begin{definition}[Matricization]\label{definition:matricization}
The matricization $\mathcal{M}_t(A)\in\mathbb{R}^{\mathcal{I}_t\times \mathcal{I}_{[t]}}$ 
of the tensor $A\in\mathbb{R}^{\mathcal{I}}$ with respect to a subset $t\subset\{1,\ldots ,d\}$ is defined via
\begin{equation*}
\mathcal{M}_t(A)_{(\mathbf{i}_{t}\, ,\, \mathbf{i}_{[t]})} = A_{(i_1,\ldots ,i_d)}
\qquad\mbox{for}
\qquad (i_1,\ldots ,i_d)\in \mathcal{I} = \mathcal{I}_1\times\cdots\times \mathcal{I}_d,
\end{equation*}
where we used the short notation $\mathbf{i}_{s} = (i_{\mu})_{\mu\in s}\in\mathcal{I}_s$ for subsets $s\subset\{1,\ldots ,d\}$.
\end{definition}
For a tensor $A\in\mathbb{R}^{\mathcal{I}}$ the \textit{Hierarchical Tucker format} is based on matricizations of $A$
with respect to the subsets $t\subset\{1,\ldots ,d\}$ of some underlying dimension tree $T_{d}$.

For each node $t\in T_{d}$ of the tree let $U_t$ be a matrix, the columns of which
span the same vector space as the columns of the matricization $\mathcal{M}_t(A)$:
\begin{equation*}
\range(U_t) = \range(\mathcal{M}_t(A)).
\end{equation*}
Such a matrix will be called \textit{frame} from now on.

If $\rank(\mathcal{M}_t(A)) = k_t$, 
one can find a frame $U_t$ with $k_t$ columns, i.e. $U_t\in\mathbb{R}^{\mathcal{I}_t\times k_t}$.
For the root $D := \{1,\ldots ,d\}$ of $T_{d}$ the matricization 
$\mathcal{M}_D(A)\in\mathbb{R}^{\mathcal{I}\times 1}$ is just the 
rearrangement of the tensor $A$ in one long column and one can choose $U_D = \mathcal{M}_D(A)$,
i.e. the frame $U_D$ becomes as hard to handle as the tensor $A$ itself. 

In the Hierarchical Tucker format the frames $U_t$ are only stored for the leaves $t = \{\mu\}$ 
of the tree, for which the $U_{\{\mu\}}$ are of size $n_{\{\mu\}}\times k_{\{\mu\}}$, $\mu = 1,\ldots ,d$. For a non-leaf node $t$ the row index set $\mathcal{I}_t$ of $U_t$ could already be too
large, which is why we do not store $U_t$ explicitly for non-leaf nodes.

Each non-leaf node $t$ has two sons $s_1$ and $s_2$ fulfilling $t = s_1\dot{\cup} s_2$, 
and one can find coefficients
$(B_t)_{i,j,\ell}$, such that
\begin{equation}\label{equation:transfer}
(U_t)_{-,i} = \sum_{j = 1}^{k_{s_1}}\sum_{\ell = 1}^{k_{s_2}}(B_t)_{i,j,\ell}\bigl((U_{s_1})_{-,j}\otimes (U_{s_2})_{-,\ell}\bigr),
\end{equation}
where $(U_t)_{-,i}\in\mathbb{R}^{\mathcal{I}_t}$ denotes the $i$-th column of $U_t$.

Instead of $U_t\in\mathbb{R}^{\mathcal{I}_t\times k_t}$ only the coefficients
$(B_t)_{i,j,\ell}$ are stored, where ${i\in\{1,\ldots ,k_t\}}$, ${j\in\{1,\ldots , k_{s_1}\}}$ and 
${\ell\in\{1,\ldots ,k_{s_2}\}}$. This results in a tensor
$B_t$ of size $k_t\times k_{s_1}\times k_{s_2}$ for each non-leaf node, referred to as \textit{transfer tensor}. 
Note that we have $k_D = 1$ for the root
($U_D$ is one single column), i.e. $B_D$ is a $k_{t_1}\times k_{t_2}$-matrix, where
$\mathrm{sons}(D) = \{t_1,t_2\}$.

For any index $(i_1,\ldots ,i_d)\in\mathcal{I}$ we have 
\begin{equation*}
A_{(i_1,\ldots ,i_d)} = \mathcal{M}_D(A)_{(i_1,\ldots ,i_d)\, ,\, 1} = (U_D)_{(i_1,\ldots ,i_d)\, ,\, 1}
\end{equation*}
and the corresponding tensor entry can be evaluated by recursively using the relation~\eqref{equation:transfer}.

A Hierarchical Tucker tensor (for short, $\mathcal{H}$-tensor) can be depicted by 
the underlying tree $T_{d}$, 
cf. Fig~\ref{figure:htensor}.
\begin{figure}
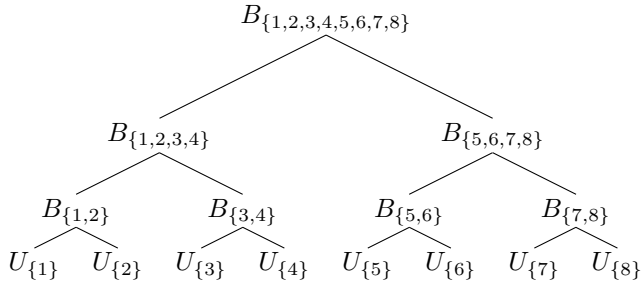

\Tree 
[.$B_{\{1,2,3,4,5,6,7,8\}}$
    [.$B_{\{1,2,3,4\}}$
        [.$B_{\{1,2\}}$
            $U_{\{1\}}$ $U_{\{2\}}$
        ].$B_{\{1,2\}}$
        [.$B_{\{3,4\}}$
            $U_{\{3\}}$ $U_{\{4\}}$
        ].$B_{\{3,4\}}$
    ].$B_{\{1,2,3,4\}}$
    [.$B_{\{5,6,7,8\}}$
        [.$B_{\{5,6\}}$
            $U_{\{5\}}$ $U_{\{6\}}$
        ].$B_{\{5,6\}}$
        [.$B_{\{7,8\}}$
            $U_{\{7\}}$ $U_{\{8\}}$
        ].$B_{\{7,8\}}$
    ].$B_{\{5,6,7,8\}}$
].$B_{\{1,2,3,4,5,6,7,8\}}$
\caption{Hieararchical tensor of dimension $d=8$ with underlying tree $T_{d}$ of Fig.~\ref{figure:hierarchy}}
\label{figure:htensor}
\end{figure}
Writing $n:=\max\{n_{\mu}\mid \mu=1,\ldots ,d\}$ and $k:=\max\{k_t\mid t\in T_{d}\}$, 
we can estimate 
the number of entries stored in the Hierarchical Tucker format by
\begin{equation}\label{equation:storageComplexity}
\#\mathrm{leaves}\cdot n\cdot k + \#(\mathrm{inner\, nodes})\cdot k^3 + k^2 = dnk + (d-2)k^3 + k^2 = \mathcal{O}(d).
\end{equation}
The numbers $k_t$, $t\in T_d$, are called \textit{Hierarchical ranks} of the tensor. To be precise, one can 
distinguish between the actual \textit{Hierarchical representation ranks} $(k_t)_{t\in T_d}$ and the minimal
ranks, since the $k_t$, $t\in T_d$, need not necessarily to be minimal: In the case
\begin{equation*}
k_t > \rank(\mathcal{M}_t (A))
\end{equation*}
we can reduce the complexity of the representation until we have $k_t = \rank(\mathcal{M}_t (A))$.
This will, of course, always be the goal, we will even truncate tensors down to lower Hierarchical ranks,
which means to find an approximating tensor of lower Hierarchical ranks.

Since we only deal with tensors in the Hierarchical Tucker format, we will sometimes use the short name
\textit{tensor ranks} or just \textit{ranks}.

In our runtime tests, we will, for simplicity, choose $k_t = k$ for all $t\in T_d\setminus \{1,\ldots ,d\}$, 
and therefore speak of \textit{the (Hierarchical) rank}.

\subsection{Distributing $\mathcal{H}$-tensors}\label{subsection:distribution}
In this article we assume the tensor $A\in\mathbb{R}^{\mathcal{I}}$ to be stored in the Hieararchical Tucker format,
where the underlying dimension tree $T_{d}$ is of balanced type according to \eqref{equation:balanced}.
Furthermore, we assume the data to be distributed among several compute nodes, where the data of each tree node is
stored on its own compute node. For tensor dimension $d$ this leads to $2d-1$ compute nodes. The tensor in Fig.~\ref{figure:htensor} would be distributed among 15 compute nodes. 
Communication is supposed to be possible via the edges of the tree, i.e. each
node can communicate with its sons and its father.

The distribution of the tensor data over the $2d-1$ compute nodes was realized by \texttt{MPI}.
In our implementation we store the structure of the tree $T_{d}$ on each \texttt{MPI}~process, 
which includes
the \texttt{MPI}~ranks (i.e. the IDs of the \texttt{MPI}~processes) 
for all the other nodes. In fact it would be sufficient that each \texttt{MPI}~process knows the 
\texttt{MPI}~ranks of its sons and its father.

As mentioned in Section~\ref{Sec:intro}, our algorithms will typically run in parallel
on \texttt{MPI} processes which belong to the same tree level in $T_d$ (level-wise parallelization).

Distributed $\mathcal{H}$-tensors may arrise from parallel tensor sampling \cite{GrKriLoe15}, where a given output
function for solutions of a parameter dependent problem is approximated in the Hierarchical Tucker format.

If $2d-1$ \texttt{MPI} processes are not available, the distribution of the tree nodes to 
the available \texttt{MPI}
processes would be an interesting issue. This question will, however, not be discussed in this
article and may be part of further investigations.

\section{Parallel arithmetic for $\mathcal{H}$-tensors}\label{section:arithmetic}
Given a tensor that has either been assembled in parallel on distributed compute nodes or has been distributed
due to its extent of storage, one would like to run tensor computations in parallel to the highest possible extent.
In this section we demonstrate how arithmetic operations in the Hierarchical Tucker format 
can be parallelized level-wise with respect to the underlying tree
$T_{d}$, where we assume the tree nodes to correspond to the compute nodes (cf.
Section~\ref{subsection:distribution}). By using a balanced tree $T_{d}$ we can thus expect 
the parallel computing time to grow like $\mathrm{depth}(T_{d}) =\log_2(d)$, assuming  load
balancing between the nodes.

In this section we consider the inner product $\langle A,B\rangle\in\mathbb{R}$,
\begin{equation}\label{equation:innerproduct}
\langle A, B\rangle = \sum_{(i_1,\ldots ,i_d)\in\mathcal{I}}A_{(i_1,\ldots ,i_d)}\cdot B_{(i_1,\ldots ,i_d)}\quad ,
\end{equation}
of two $\mathcal{H}$-tensors $A,B\in\mathbb{R}^{\mathcal{I}}$, 
the addition $A+B$, and the application of an operator $\mathcal{L}$ to an $\mathcal{H}$-tensor $A$,
both followed by the truncation of the result, either down to prescibed ranks $(k_t)_{t\in T_{d}}$ or 
with respect to prescribed accuracy $\varepsilon$.

Perhaps the most basic operation for an $\mathcal{H}$-tensor $A\in\mathbb{R}^{\mathcal{I}}$ is the evaluation of single
tensor entries for given indices $(i_1,\ldots ,i_d)\in\mathcal{I}$. As we do not store the tensor entry-by-entry,
this is as well an issue. The evaluation has briefly been addressed in Section~\ref{subsection:htuckerformat} and
can also be parallelized level-wise.

\subsection{Evaluation of tensor entries in the Hierarchical Tucker format}\label{subsection:evaluation}
Assume the tensor $A\in\mathbb{R}^{\mathcal{I}}$ to be stored in the Hierarchical Tucker format, based on the dimension
tree $T_{d}$. For a given index $(i_1,\ldots ,i_d)\in\mathcal{I}$ the tensor entry $A_{(i_1,\ldots ,i_d)}$ 
shall be computed. Note that each leaf frame $U_{\{\mu\}}\in\mathbb{R}^{\mathcal{I}_{\mu}\times k_{\{\mu\}}}$ only depends 
on the index $i_{\mu}\in\mathcal{I}_{\mu}$, $\mu = 1,\ldots ,d$. Thus in each leaf frame we just choose the corresponding row
$(U_{\{\mu\}})_{i_{\mu},-}$ and send it to the father node. This can be carried out in parallel for all leaves.

Consider the node $t = \{\mu ,\nu\}$ as father of the leaves $\{\mu\}$ and $\{\nu\}$ in $T_{d}$. 
The compute node belonging to $t$ then receives the rows $(U_{\{\mu\}})_{i_{\mu},-}$ and $(U_{\{\nu\}})_{i_{\nu},-}$ 
from its sons and can thus compute the row $(U_{\{\mu,\nu\}})_{(i_{\mu},i_{\nu}),-}$ of its own frame 
by \eqref{equation:transfer}
and send it to its father
(remember that for an inner node $t$ the frame $U_t$ is not stored explicitly - instead the coefficients $B_{t}$ 
are stored):
\begin{equation*}
(U_t)_{(i_{\mu},i_{\nu}),q} = \sum_{j=1}^{k_{\{\mu\}}}\sum_{\ell = 1}^{k_{\{\nu\}}}
(B_{t})_{q,j,\ell} \cdot (U_{\{\mu\}})_{i_{\mu},j} \cdot  (U_{\{\nu\}})_{i_{\nu},\ell} \quad ,\qquad
q = 1,\ldots ,k_{t}.
\end{equation*}
In general each inner node $t\in T_{d}$ with $\mathrm{sons}(t) = \{s_1,s_2\}$ receives the rows
$(U_{s_1})_{\mathbf{i}_{s_1},-}$ and $(U_{s_2})_{\mathbf{i}_{s_2},-}$ from its sons and then computes the row
$(U_{t})_{\mathbf{i}_{t},-}$ using \eqref{equation:transfer}:
\begin{equation}\label{equation:transfermult}
(U_t)_{\mathbf{i}_t,q} = \sum_{j = 1}^{k_{s_1}}\sum_{\ell = 1}^{k_{s_2}} (B_{t})_{q,j,\ell}\cdot
(U_{s_1})_{\mathbf{i}_{s_1},j}\cdot (U_{s_2})_{\mathbf{i}_{s_2},\ell}\quad ,\qquad q = 1,\ldots ,k_t\; ,
\end{equation}
where $\mathbf{i}_s = (i_{\mu})_{\mu\in s}$ denotes the subindex of $(i_1,\ldots ,i_d)$ corresponding to
the subset $s\subset\{1,\ldots ,d\}$. After computing the row $(U_t)_{\mathbf{i}_t, q}$, it is sent to the father node.

Finally the entry $A_{(i_1,\ldots ,i_d)}$ is obtained on the root node $D = \{1,\ldots ,d\}$ after the rows 
$U_{\mathbf{i}_{t_1},-}$ and $U_{\mathbf{i}_{t_2},-}$ of the root sons $t_1,\,t_2$ have been received:
\begin{equation*}
A_{(i_1,\ldots ,i_d)} = (U_D)_{((i_1,\ldots ,i_d),1)}
= \sum_{j=1}^{k_{t_1}}\sum_{\ell = 1}^{k_{t_2}}(B_D)_{j,\ell} U_{\mathbf{i}_{t_1},j}U_{\mathbf{i}_{t_2},\ell}
\end{equation*}
(remember: the root transfer tensor $B_D$ is only 2-dimensional).

The multiplications in \eqref{equation:transfermult} need $\mathcal{O}(k_t k_{s_1} k_{s_2})$ operations for 
each inner node $t\in T_d$.
Depending on the maximal rank $k:=\max\{k_t\mid t\in T_{d}\}$, we can therefore estimate the complexity
in each node by $\mathcal{O}(k^3)$.

Assuming the index $(i_1,\ldots ,i_d)$ to be known on each leaf node of the tree, 
the process of selecting all the rows $(U_{\{\mu\}})_{i_{\mu},-}$, 
$\mu = 1,\ldots ,d$, in the leaves and sending them to the respective father nodes works independently on each leaf and can
therefore be carried out in parallel on all leaves.

Each inner node $t$ on level $\ell$ of the tree requires only the results of its sons $s_1$ and $s_2$, which are
on level $\ell + 1$. Thus no interaction to any other node on level $\ell$ is required, i.e. all nodes on level $\ell$ work
entirely independent of each other and can act in parallel.

This demonstrates that the evaluation can be parallelized level-wise with respect to 
the tree $T_{d}$ as illustrated in
Fig.~\ref{figure:paralleleval}.

\begin{figure}
\begin{subfigure}[b]{0.48\textwidth}
\begin{tikzpicture}[scale=1.2]
\node (U1)  {\leafframeindexCol{black!15}{blue!50}{2}};
\node (U2)      [right of = U1, xshift = 0.2cm] {\leafframeindexCol{black!15}{red!50}{4}};
\node (U3)      [right of = U2, xshift = 1.0cm] {\leafframeindexCol{black!15}{blue!50}{5}};
\node (U4)      [right of = U3, xshift = 0.2cm] {\leafframeindexCol{black!15}{red!50}{3}};
\node (B12)     [above right of = U1, yshift = 1.0cm] {\transfertensorCol{black!15}} edge (U1) edge (U2);
\node (B34)     [above right of = U3, yshift = 1.0cm] {\transfertensorCol{black!15}} edge (U3) edge (U4);
\node (B1234)   [above right of = B12, xshift = 1.0cm, yshift = 1.0cm] {\roottransfertensorCol{black!15}} edge (B12) edge (B34);
\end{tikzpicture}
\caption{Highest level ($\ell = 2$): In each leaf node $\{\mu\}$, $\mu = 1,2,3 ,4$ choose the row $i_{\mu}$ and send it to the father node.\\ $\,$\\ $\,$}
\label{figure:parallelevala}
\end{subfigure}
\quad
\begin{subfigure}[b]{0.48\textwidth}
\begin{tikzpicture}[scale=1.2]
\node (U1)  {\leafframeCol{black!15}};
\node (U2)      [right of = U1, xshift = 0.2cm] {\leafframeCol{black!15}};
\node (U3)      [right of = U2, xshift = 1.0cm] {\leafframeCol{black!15}};
\node (U4)      [right of = U3, xshift = 0.2cm] {\leafframeCol{black!15}};
\node (B12)     [above right of = U1, yshift = 1.0cm] {\transfertensormultresultCol{black!15}{blue!50}{red!50}{green!50}} edge (U1) edge (U2);
\node (B34)     [above right of = U3, yshift = 1.0cm] {\transfertensormultresultCol{black!15}{blue!50}{red!50}{yellow!50}} edge (U3) edge (U4);
\node (B1234)   [above right of = B12, xshift = 1.0cm, yshift = 1.0cm] {\qquad\qquad\quad\roottransfertensormultCol{black!15}{green!50}{yellow!50}{$A_{(i_1,i_2,i_3,i_4)}$}} edge (B12) edge (B34);
\end{tikzpicture}
\caption{Middle level ($\ell = 1$): Multiply the received rows of the sons along the 1st and the 2nd tensor dimension. Send the result to the father (root level, $\ell = 0$), where the tensor entry $A_{(i_1,i_2,i_3,i_4)}$ is finally computed.}
\label{figure:parallelevalb}
\end{subfigure}
\caption{Parallel evaluation of one tensor entry for a given index $(i_1,i_2,i_3,i_4)\in\mathcal{I}$ ($d=4$) in the
Hierarchical Tucker format.
The operations can be parallelized level-wise (${n_{\mu} = n}$ and ${k_{t} = k}$ for ${\mu=1,2,3,4}$ and ${t\in T_{d}}$).}
\label{figure:paralleleval}
\end{figure}
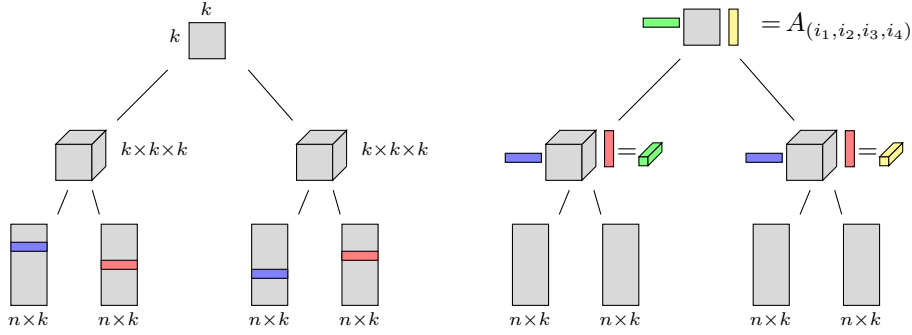

\subsection{The inner product of two $\mathcal{H}$-tensors}\label{subsection:innerproduct} 
The the inner product $\langle A,C\rangle$ of two 
$\mathcal{H}$-tensors 
$A,C\in\mathbb{R}^{\mathcal{I}}$, $\mathcal{I} = \mathcal{I}_1\times\cdots\times\mathcal{I}_d$, 
is defined by \eqref{equation:innerproduct}, i.e. the sizes $n_{\mu} = \#\mathcal{I}_{\mu}$, $\mu = 1,\ldots ,d$, have to
coincide for $A$ and $C$. The tensor ranks, however, can be different for $A$ and $C$, which is why we now write 
$(k_t)_{t\in T_{d}}$ for the Hierarchical ranks of $A$ and $(\bar{k}_{t})_{t\in T_{d}}$ for the
Hierarchical ranks of $C$. Nonetheless we use the same tree $T_{d}$ for $A$ and $C$ and even suppose them to be
distributed on the same compute nodes, i.e. the data of $A$, which belongs to a certain tree node of $T_{d}$ is stored
on the same compute node as the corresponding data of $C$ belonging to the same tree node. The latter assumption can, however,
be left off, which would simply lead to more communication being needed.

As before, we denote the transfer tensors of $A$ by $B_{t}$ for all non-leaf nodes $t\in T_{d}$ and write 
$U_{\{\mu\}}$, $\mu = 1,\ldots ,d$, for the leaf frames, or more general $U_t$ for the frame of node $t\in T_{d}$ 
(these are only stored for the leaves, 
cf. Section~\ref{subsection:htuckerformat}).
For the tensor $C$ we use the notation $F_{t}$ for the transfer
tensors and $V_t$ for the frames.

In order to compute the inner product $\langle A,C\rangle$ in the Hierarchical format we first define
recursively the matrices $\Phi_{t}\in \mathbb{R}^{k_t\times \bar{k}_t}$ for each tree node $t\in T_{d}$, which will coincide with the inner product matrices $(U_{t})^{\top} V_{t}$, $t\in T_{d}$. The inner product will finally be obtained by
$\langle A,C\rangle = (U_D)^{\top}V_D = \Phi_{D}$, where $D = \{1,\ldots ,d\}$ is the root of $T_{d}$.
\begin{definition}\label{definition:phi}
Let $A,C\in\mathbb{R}^{\mathcal{I}}$ be $\mathcal{H}$-tensors, based on the same tree $T_{d}$ with
ranks $(k_t)_{t\in T_{d}}$ and $(\bar{k}_t)_{t\in T_{d}}$, i.e. for $t\in T_{d}$ we have
$U_{t}\in\mathbb{R}^{\mathcal{I}_t\times k_t}$ and $V_t\in\mathbb{R}^{\mathcal{I}_t\times \bar{k}_t}$ (cf. Section~\ref{subsection:htuckerformat}) fulfilling
\begin{equation*}
\range(U_t) = \range(\mathcal{M}_t(A))\qquad\mbox{and}\qquad 
\range(V_t) = \range(\mathcal{M}_t(C)),
\end{equation*}
as well as transfer tensors $B_{t}$ and $F_{t}$ for all non-leaf nodes $t\in T_{d}$ 
fulfilling \eqref{equation:transfer}, which reads
\begin{equation}\label{equation:transfer2}
(V_t)_{-,i} = \sum_{j=1}^{\bar{k}_{s_1}}\sum_{\ell = 1}^{\bar{k}_{s_2}}(F_{t})_{i,j,\ell}\bigl( (V_{s_1})_{-,j}\otimes (V_{s_2})_{-,\ell} \bigr)
\end{equation}
for the tensor $C$, where $\mathrm{sons}(t) = \{s_1,s_2\}$.

The matrices $\Phi_{t}$ are then defined as $\Phi_{\{\mu\}} = (U_{\{\mu\}})^{\top} V_{\{\mu\}}$ for the
leaves, $\mu = 1,\ldots ,d$, and recursively by
\begin{equation}\label{equation:defphi}
(\Phi_{t})_{j_t,\bar{j}_t} = \sum_{j_{s_1}=1}^{k_{s_1}}\sum_{\bar{j}_{s_1}=1}^{\bar{k}_{s_1}}
\sum_{j_{s_2}=1}^{k_{s_2}}\sum_{\bar{j}_{s_2}=1}^{\bar{k}_{s_2}}(B_{t})_{j_t,j_{s_1},j_{s_2}}(\Phi_{s_1})_{j_{s_1},\bar{j}_{s_1}}
(\Phi_{s_2})_{j_{s_2},\bar{j}_{s_2}}(F_{t})_{\bar{j}_t,\bar{j}_{s_1},\bar{j}_{s_2}}.
\end{equation}
for all non-leaf nodes $t\in T_{d}$.
\end{definition}
\begin{lemma}\label{lemma:phi}
For each node $t\in T_{d}$ we have 
\begin{equation}\label{equation:phi}
\Phi_t = (U_t)^{\top}V_t.
\end{equation}
\end{lemma}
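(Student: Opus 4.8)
The plan is to prove \eqref{equation:phi} by induction over the dimension tree $T_d$, working from the leaves up to the root so as to mirror the recursive structure of Definition~\ref{definition:phi}. The base case concerns the leaves $t=\{\mu\}$, $\mu=1,\ldots,d$, where there is nothing to prove: $\Phi_{\{\mu\}}$ is \emph{defined} to be $(U_{\{\mu\}})^{\top}V_{\{\mu\}}$, so \eqref{equation:phi} holds by definition.

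For the inductive step I would fix a non-leaf node $t$ with $\mathrm{sons}(t)=\{s_1,s_2\}$ and assume the claim for both sons, i.e. $\Phi_{s_1}=(U_{s_1})^{\top}V_{s_1}$ and $\Phi_{s_2}=(U_{s_2})^{\top}V_{s_2}$. First I would write the $(i,\bar{i})$-entry of $(U_t)^{\top}V_t$ as the Euclidean inner product $\langle (U_t)_{-,i},(V_t)_{-,\bar{i}}\rangle$ and expand both columns by the transfer relations \eqref{equation:transfer} and \eqref{equation:transfer2}. Interchanging the (finite) sums with the inner product yields
\begin{equation*}
\bigl((U_t)^{\top}V_t\bigr)_{i,\bar{i}} = \sum_{j,\ell}\sum_{\bar{j},\bar{\ell}}(B_t)_{i,j,\ell}(F_t)_{\bar{i},\bar{j},\bar{\ell}}\,\bigl\langle (U_{s_1})_{-,j}\otimes (U_{s_2})_{-,\ell},\,(V_{s_1})_{-,\bar{j}}\otimes (V_{s_2})_{-,\bar{\ell}}\bigr\rangle.
\end{equation*}

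The crux of the argument is the factorization of the inner product over the Kronecker product, namely $\langle a\otimes b,c\otimes d\rangle=\langle a,c\rangle\langle b,d\rangle$, applied to the index sets belonging to $s_1$ and $s_2$. This identity turns the four-index coupling into a product of two son inner products, $\langle (U_{s_1})_{-,j},(V_{s_1})_{-,\bar{j}}\rangle\,\langle (U_{s_2})_{-,\ell},(V_{s_2})_{-,\bar{\ell}}\rangle=(\Phi_{s_1})_{j,\bar{j}}(\Phi_{s_2})_{\ell,\bar{\ell}}$, where the last equality is precisely the induction hypothesis. Substituting this back and relabelling the summation indices ($i\mapsto j_t$, $j\mapsto j_{s_1}$, $\ell\mapsto j_{s_2}$, together with the barred analogues) reproduces verbatim the right-hand side of \eqref{equation:defphi}, so $\bigl((U_t)^{\top}V_t\bigr)_{i,\bar{i}}=(\Phi_t)_{i,\bar{i}}$ for all $i,\bar{i}$, which closes the induction.

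I expect the main obstacle to be bookkeeping rather than mathematics. One must verify that the Kronecker factorization respects the disjoint splitting $t=s_1\,\dot{\cup}\,s_2$, so that the matricization index $\mathbf{i}_t$ genuinely decouples as $(\mathbf{i}_{s_1},\mathbf{i}_{s_2})$ and the factors $\langle\cdot,\cdot\rangle$ over the son index sets are well defined; and that the summation ranges in \eqref{equation:transfer}, \eqref{equation:transfer2} and \eqref{equation:defphi} line up correctly (the unbarred indices over $1,\ldots,k_{s_1}$ and $1,\ldots,k_{s_2}$, the barred ones over the corresponding $\bar{k}$-ranges). Once the disjointness of the son index sets is invoked to justify the factorization, the whole induction goes through.
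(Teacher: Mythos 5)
Your proposal is correct and follows essentially the same argument as the paper: induction from the leaves to the root, expanding the columns of $U_t$ and $V_t$ via the transfer relations \eqref{equation:transfer} and \eqref{equation:transfer2} and using the disjointness $t=s_1\,\dot{\cup}\,s_2$ to factor the sum over $\mathbf{i}_t$ into sums over $\mathbf{i}_{s_1}$ and $\mathbf{i}_{s_2}$. The only difference is cosmetic: you start from $(U_t)^{\top}V_t$ and arrive at \eqref{equation:defphi}, whereas the paper reads the same chain of equalities in the reverse direction.
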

\begin{proof}
On the highest level of the tree \eqref{equation:phi} holds by definition:
$\Phi_{\{\mu\}} = (U_{\{\mu\}})^{\top}V_{\{\mu\}}$, $\mu = 1,\ldots ,d$.
Assume \eqref{equation:phi} being true for level $\ell$ of the tree. Then for any node $t\in T_{d}$ of the next
lower level $\ell - 1$ we may use that \eqref{equation:phi} holds true for the sons $s_1$ and $s_2$ of $t$, which are
on level $\ell$. Then \eqref{equation:defphi} yields
\begin{align*}
& (\Phi_{t})_{j_t,\bar{j}_t} \\
& = 
\sum_{\begin{subarray}{l}j_{s_1},\bar{j}_{s_1}\\ j_{s_2},\bar{j}_{s_2}\end{subarray}}
(B_{t})_{j_t,j_{s_1},j_{s_2}}
\biggl( \sum_{\mathbf{i}_{s_1}} (U_{s_1})_{\mathbf{i}_{s_1},j_{s_1}}V_{\mathbf{i}_{s_1},\bar{j}_{s_1}} 
\biggr)
\biggl(
\sum_{\mathbf{i}_{s_2}}(U_{s_2})_{\mathbf{i}_{s_2},j_{s_2}}(V_{s_2})_{\mathbf{i}_{s_2},\bar{j}_{s_2}}
\biggr)
(F_{t})_{\bar{j}_t,\bar{j}_{s_1},\bar{j}_{s_2}} \\
& =
\sum_{\mathbf{i}_{s_1},\mathbf{i}_{s_2}}
\sum_{j_{s_1},j_{s_2}}(B_{t})_{j_t,j_{s_1},j_{s_2}}(U_{s_1})_{\mathbf{i}_{s_1},j_{s_1}}(U_{s_2})_{\mathbf{i}_{s_2},j_{s_2}}
\sum_{\bar{j}_{s_1},\bar{j}_{s_2}} (F_{t})_{\bar{j}_t,\bar{j}_{s_1},\bar{j}_{s_2}} (V_{s_1})_{\mathbf{i}_{s_1},\bar{j}_{s_1}}
(V_{s_2})_{\mathbf{i}_{s_2},\bar{j}_{s_2}} \\
& \underset{\begin{subarray}{l}\eqref{equation:transfer}\\ \eqref{equation:transfer2}\end{subarray}}{=}
\sum_{\mathbf{i}_{s_1},\mathbf{i}_{s_2}} (U_t)_{\mathbf{i}_{t},j_t} (V_t)_{\mathbf{i}_t,\bar{j}_t} 
= \sum_{\mathbf{i}_{t}} (U_t)_{\mathbf{i}_t,j_t}(V_t)_{\mathbf{i}_t,\bar{j}_t} = (U_t)^{\top}V_t,
\end{align*}
where we used the notation $\mathbf{i}_{s} = (i_{\mu})_{\mu\in s}$ for subsets $s\subset \{1,\ldots ,d\}$ again. From this 
\eqref{equation:phi} follows for all levels $\ell$ of $T_{d}$ by induction.
\end{proof}

Using Definition~\ref{definition:phi} and Lemma~\ref{lemma:phi}, we can compute the inner product
$\langle A,C\rangle$ for two $\mathcal{H}$-tensors $A,C\in\mathbb{R}^{\mathcal{I}}$ by recursively computing the matrices
$\Phi_t$ for each node $t\in T_{d}$. The inner product is then obtained on the root node $D = \{1,\ldots ,d\}$ as
$\langle A, C\rangle = \Phi_{D}$. The matrices $\Phi_{t}$, $t\in T_{d}$, are of size $\mathbb{R}^{k_t\times \bar{k}_t}$ and can be 
computed in ${\mathcal{O}(k_{\{\mu\}}\bar{k}_{\{\mu\}} \#\mathcal{I}_{\mu})}$ for each leaf node $\{\mu\}$, 
in ${\mathcal{O}(k_t k_{s_1} k_{s_2}\bar{k}_{s_1} + k_t k_{s_2}\bar{k}_{s_1}\bar{k}_{s_2} + k_{t}\bar{k}_t \bar{k}_{s_1} \bar{k}_{s_2})}$ for each inner node $t$ with
$\mathrm{sons}(t) = \{s_1,s_2\}$ and in 
${\mathcal{O}(k_{t_1}k_{t_2}\bar{k}_{t_1} + k_{t_2}\bar{k}_{t_1}\bar{k}_{t_2} + \bar{k}_{t_1}\bar{k}_{t_2})}$ 
for the root $D = \{1,\ldots ,d\}$, where
$\mathrm{sons}(D) = \{t_1,t_2\}$. With $k :=\max\{k_t, \bar{k}_t\mid t\in T_d\}$ this yields a complexity of
$\mathcal{O}(k^4)$, where the ordering of the multiplications in \eqref{equation:defphi} might
be relevant (in the case of inhomogenous ranks $k_t$, $t\in T_d$).

For each leaf node, the definition $\Phi_{\{\mu\}}=(U_t)^{\top}V_t$, $\mu = 1,\ldots ,d$, requires only data stored 
on the respective leaf $\{\mu\}$; for each non-leaf node $t\in T_{d}$ the definition~\eqref{equation:defphi} 
of $\Phi_t$ requires
data stored on $t$ as well as the matrices $\Phi_{s_1}$, $\Phi_{s_2}$ of the sons.
The computation can therefore be parallelized level-wise, as illustrated in Fig.~\ref{figure:innerproduct}.

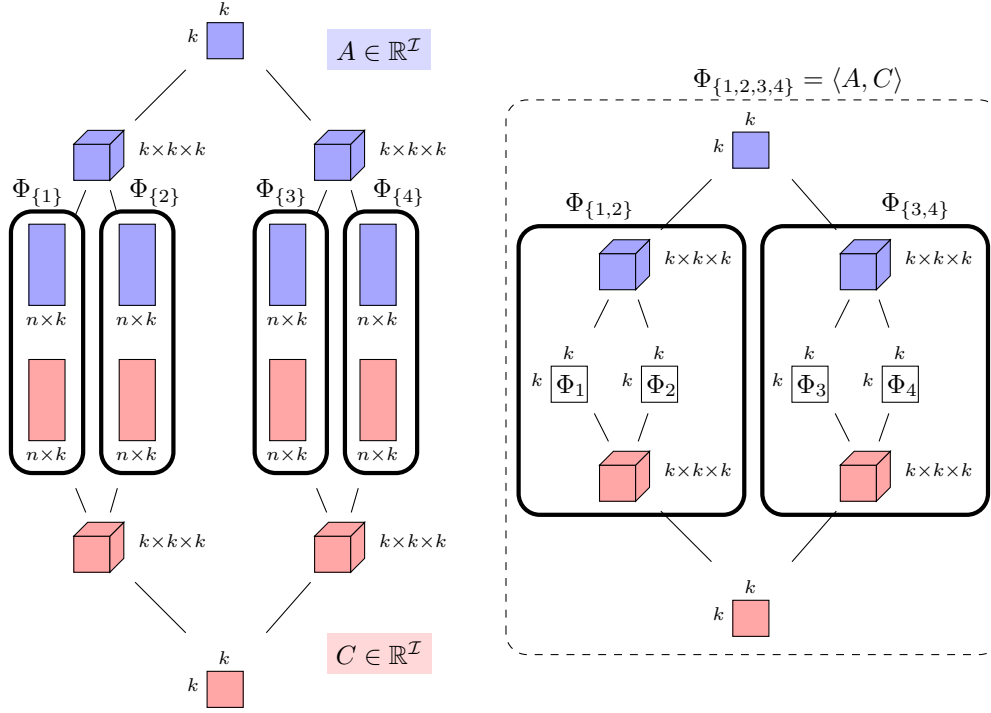
\begin{figure}
\begin{subfigure}[b]{0.48\textwidth}
\begin{tikzpicture}[scale=1.2]
\node (U1)  {\leafframeCol{blue!35}};
\node (U2)      [right of = U1, xshift = 0.2cm] {\leafframeCol{blue!35}};
\node (U3)      [right of = U2, xshift = 1.0cm] {\leafframeCol{blue!35}};
\node (U4)      [right of = U3, xshift = 0.2cm] {\leafframeCol{blue!35}};
\node (B12)     [above right of = U1, yshift = 1.0cm] {\transfertensorCol{blue!35}} edge (U1) edge (U2);
\node (B34)     [above right of = U3, yshift = 1.0cm] {\transfertensorCol{blue!35}} edge (U3) edge (U4);
\node (B1234)   [above right of = B12, xshift = 1.0cm, yshift = 1.0cm] {\roottransfertensorCol{blue!35}} edge (B12) edge (B34);
\node (A)       [right of = B1234, xshift = 1.0cm, yshift = -0.3cm, fill=blue!15] {$A\in\mathbb{R}^{\mathcal{I}}$};
\node (V1)      [below of = U1, yshift = -0.8cm] {\leafframeCol{red!35}};          
\node (V2)      [below of = U2, yshift = -0.8cm] {\leafframeCol{red!35}};          
\node (V3)      [below of = U3, yshift = -0.8cm] {\leafframeCol{red!35}};          
\node (V4)      [below of = U4, yshift = -0.8cm] {\leafframeCol{red!35}};          
\node (F12)     [below right of = V1, yshift = -1.0cm] {\transfertensorCol{red!35}} edge (V1) edge (V2);
\node (F34)     [below right of = V3, yshift = -1.0cm] {\transfertensorCol{red!35}} edge (V3) edge (V4);
\node (F1234)   [below right of = F12, xshift = 1.0cm, yshift = -1.0cm] {\roottransfertensorCol{red!35}} edge (F12) edge (F34);
\node (C)       [right of = F1234, xshift = 1.0cm, yshift = 0.3cm, fill=red!15] {$C\in\mathbb{R}^{\mathcal{I}}$};
\draw[ultra thick, rounded corners = 8pt] (-0.4,-2.1) rectangle (0.4,0.8);
\draw[ultra thick, rounded corners = 8pt] (0.6,-2.1) rectangle (1.4,0.8);
\draw[ultra thick, rounded corners = 8pt] (2.3,-2.1) rectangle (3.1,0.8);
\draw[ultra thick, rounded corners = 8pt] (3.3,-2.1) rectangle (4.1,0.8);
\node (PH1)     at (-0.1,1.01) {$\Phi_{\{1\}}$};
\node (PH2)     at (1.2,1.01) {$\Phi_{\{2\}}$};
\node (PH3)     at (2.6,1.01) {$\Phi_{\{3\}}$};
\node (PH4)     at (3.9,1.01) {$\Phi_{\{4\}}$};
\end{tikzpicture}
\caption{Highest level ($\ell = 2$): On each leaf node $\{\mu\}$, $\mu = 1,2,3,4$, the matrix 
$\Phi_{\{\mu\}} = (U_{\{\mu\}})^{\top}V_{\{\mu\}}$ is computed and sent to the father node(s).\\$\,$}
\label{figure:innerproducta}
\end{subfigure}\quad
\begin{subfigure}[b]{0.48\textwidth}
\begin{tikzpicture}[scale=1.2]
\node (P1)  {\roottransfertensor};
\node (P2)      [right of = P1, xshift = 0.2cm] {\roottransfertensor};
\node (P3)      [right of = P2, xshift = 1.0cm] {\roottransfertensor};
\node (P4)      [right of = P3, xshift = 0.2cm] {\roottransfertensor};
\node (B12)     [above right of = P1, yshift = 0.7cm] {\transfertensorCol{blue!35}} edge (P1) edge (P2);
\node (B34)     [above right of = P3, yshift = 0.7cm] {\transfertensorCol{blue!35}} edge (P3) edge (P4);
\node (B1234)   [above right of = B12, xshift = 1.0cm, yshift = 1.0cm] {\roottransfertensorCol{blue!35}} edge (B12) edge (B34);
\node (F12)     [below right of = P1, yshift = -0.7cm] {\transfertensorCol{red!35}} edge (P1) edge (P2);
\node (F34)     [below right of = P3, yshift = -0.7cm] {\transfertensorCol{red!35}} edge (P3) edge (P4);
\node (F1234)   [below right of = F12, xshift = 1.0cm, yshift = -1.0cm] {\roottransfertensorCol{red!35}} edge (F12) edge (F34);
\node (PH4)     [right of = P4, xshift = -1.02cm, yshift = -0.2cm] {$\Phi_4$};
\node (PH3)     [right of = P3, xshift = -1.02cm, yshift = -0.2cm] {$\Phi_3$};
\node (PH2)    [right of = P2, xshift = -1.02cm, yshift = -0.2cm] {$\Phi_2$};
\node (PH1)    [right of = P1, xshift = -1.02cm, yshift = -0.2cm] {$\Phi_1$};
\draw[ultra thick, rounded corners = 8pt] (-0.6,-1.6) rectangle (1.9,1.6);
\draw[ultra thick, rounded corners = 8pt] (2.1,-1.6) rectangle (4.6,1.6);
\draw[dashed, rounded corners = 8pt] (-0.74,-3.15) rectangle (4.74,3);
\node (PH12)    at (0.3,1.82) {$\Phi_{\{1,2\}}$};
\node (PH34)    at (3.8,1.82) {$\Phi_{\{3,4\}}$};
\node (PH1234)  at (2.5,3.2) {$\Phi_{\{1,2,3,4\}} = \langle A,C\rangle$};
\end{tikzpicture}
\vspace{8mm}
\caption{Middle level ($\ell = 1$): The matrices $\Phi_{\{1,2\}}$ and $\Phi_{\{3,4\}}$ are computed and sent to the root node(s) ($\ell = 0$), 
where the inner product $\langle A, C\rangle = \Phi_{\{1,2,3,4\}}$ is finally computed.}
\label{figure:innerproductb}
\end{subfigure}
\caption{Parallel computation of the inner product $\langle A, C\rangle$ for two $\mathcal{H}$-tensors 
$A,C\in\mathbb{R}^{\mathcal{I}}$ ($d=4$). The operations can be parallelized level-wise.}
\label{figure:innerproduct}
\end{figure}

\subsection{The truncation of $\mathcal{H}$-tensors based on the Hierarchical SVD \cite{Gr10}}\label{subsection:truncation}
The truncation of an $\mathcal{H}$-tensor down to lower
Hierarchical ranks is essential for the addition $A+C$ of two $\mathcal{H}$-tensors
and for the application of some operator $\mathcal{L}$ to $A$,
where the operator
$\mathcal{L}\in\mathbb{R}^{\mathcal{J}\times\mathcal{I}}$ is as well stored in the Hierarchical Tucker format.
Both operations increase the Hierarchical ranks, as it is well known for the addition of 
matrices and the matrix rank from linear algebra. 

In this section we briefly summarize the truncation techniques from \cite{Gr10} to truncate an $\mathcal{H}$-tensor
$A\in\mathbb{R}^{\mathcal{I}}$ down to lower Hierarchical ranks $(k_t)_{t\in T_{d}}$, either for prescribed ranks or
for prescribed accuracy $\varepsilon$. Moreover, we describe how these techniques can be 
parallelized level-wise with respect to
the underlying tree $T_{d}$ for the case of distributed tensors.

In general the truncation of a tensor $A\in\mathbb{R}^{\mathcal{I}}$ (not necessarily an $\mathcal{H}$-tensor)
to prescribed ranks $(k_t)_{t\in T_{d}}$ for
some tensor tree $T_{d}$ can be achieved like this (cf. \cite{Gr10}):
\begin{enumerate}
\item For all leaves $\{\mu\}$, $\mu = 1,\ldots ,d$, compute an SVD $\mathcal{M}_{\{\mu\}}(A) =U\Sigma V^{\top}$ 
\label{enumerate:truncation1}
of the matricization $\mathcal{M}_{\{\mu\}}(A)$ and store 
the left singular vectors corresponding to the $k_{\{\mu\}}$ largest singular values as frame $U_{\{\mu\}}$:
\begin{equation*}
U_{\{\mu\}} = U\mid_{\mathcal{I}_{\mu}\times\{1,\ldots ,k_{\{\mu\}}\}}\qquad \mbox{(singular values in descending order)}.
\end{equation*}
\item For all inner nodes $t\in T_{d}$ with $\mathrm{sons}(t)=\{s_1,s_2\}$ 
compute $U_{t} = U\mid_{\mathcal{I}_t\times\{1,\ldots ,k_t\}}$, 
where $\mathcal{M}_t(A) = U\Sigma V^{\top}$ as in (\ref{enumerate:truncation1}.).
Instead of storing the frame $U_t$ we store the transfer tensor $B_t\in\mathbb{R}^{k_t\times k_{s_1}\times k_{s_2}}$, 
which can be computed by
\begin{equation*}
(B_t)_{i,j,\ell} = \langle (U_t)_{-,i}\; ,\; (U_{s_1})_{-,j}\otimes (U_{s_2})_{-,\ell}\rangle ,
\end{equation*}
where $U_{s_1}$ and $U_{s_2}$ are the respective singular vectors for the sons of $t$.
\item For the root $D = \{1,\ldots ,d\}$ with $\mathrm{sons}(D) = \{t_1,t_2\}$ compute the transfer tensor $B_{D}$ by
\begin{equation*}
(B_D)_{j,\ell} = \langle \mathcal{M}_D(A)\; ,\; (U_{t_1})_{-,j}\otimes (U_{t_2})_{-,\ell}\rangle.
\end{equation*}
\end{enumerate}

In the case of large tensors, which are already given in the Hierarchical Tucker format, 
computing the full matricizations 
$\mathcal{M}_t(A)$ can easliy get impractical. Even the computation of $U_t$ for a non-leaf node $t\in T_{d}$ 
could be no more feasible. 

We make use of results from \cite{Gr10}, which allow for the truncation of an $\mathcal{H}$-tensor $A\in\mathbb{R}^{\mathcal{I}}$
down to smaller Hierarchical ranks, without explicitly computing $\mathcal{M}_t(A)$ or $U_t$ for non-leaf nodes
$t\in T_{d}$. For now we assume the columns of the frames $U_t\in\mathbb{R}^{\mathcal{I}_t\times k_t}$ 
to be orthonormal for all non-root nodes
$t\in T_{d}$ (if this is not the case, the tensor can be orthogonalized, cf. Section~\ref{subsection:orthogonalization}).
For all non-root nodes $t\in T_{d}$ one can find a matrix $V_t\in\mathbb{R}^{\mathcal{I}_{[t]}\times k_t}$, 
$[t] = \{1,\ldots ,d\}\setminus t$, such that
the matricization $\mathcal{M}_t(A)$ can be written as
\begin{equation}\label{equation:matricizationdecomp}
\mathcal{M}_t(A) = U_t(V_t)^{\top},
\end{equation}
since $\range(U_t) = \range(\mathcal{M}_t(A))$. From \cite{Gr10} we know how to compute the matrices
${\widehat{B}_{t} := (V_t)^{\top}V_t\in\mathbb{R}^{k_t\times k_t}}$ for all
$t\in T_{d}\setminus \{D\}$ in $\mathcal{O}(dk^4)$, $k:=\max\{k_t\mid t\in T_d\}$, for
the case that the columns of $U_t$, $t\in T_d\setminus \{D\}$, are orthonormal.
If ${V_t^{\top} = Q_t\Sigma_t W_t^{\top}}$ is a singular value decomposition of $V_t^{\top}$, then
$\widehat{B}_t Q_t = Q_t\Sigma_t^2$, i.e. we get the left singular vectors and singular values of $V_t^{\top}$ as
the eigenvectors and the square roots of the eigenvalues of $\widehat{B}_t$.
Using \eqref{equation:matricizationdecomp} and the fact that the columns of
$U_t$ are orthonormal, we get 
$U_tQ_t\in\mathbb{R}^{\mathcal{I}_t\times k_t}$ as left singular vectors of $\mathcal{M}_t(A)$ with the same 
singular values. We can therefore truncate from rank $k_t$ down to rank $r_t < k_t$
with respect to $t\in T_{d}\setminus\{D\}$ by multiplying $U_t$ only with the first $r_t$ columns of $Q_t$, i.e.
\begin{equation}\label{equation:nodewisetruncation}
U_t \leftarrow U_t\cdot (Q_t\mid_{\{1,\ldots, k_t\}\times \{1,\ldots ,r_t\}}).
\end{equation}
From the following Lemma \ref{lemma:transfertransformation} (cf. \cite{Gr10}) we will see how the transfer tensors have to be transformed in order to achieve
\eqref{equation:nodewisetruncation} for each node $t\in T_{d}\setminus\{D\}$. Since we will use
Lemma \ref{lemma:transfertransformation} again in Section \ref{subsection:orthogonalization}, we
formulate it for arbitrary invertible matrices $Y,Z$:
\begin{lemma}[Transfer tensor transformation]\label{lemma:transfertransformation}
Let $A\in\mathbb{R}^{\mathcal{I}}$ be an $\mathcal{H}$-tensor with underlying tree $T_{d}$ and let
$U_t$ and $B_{t}$ be defined as in Section~\ref{subsection:htuckerformat}, i.e.
\begin{equation*}
(U_t)_{-,i} = \sum_{j=1}^{k_{s_1}}\sum_{\ell = 1}^{k_{s_2}}(B_t)_{i,j,\ell} (U_{s_1})_{-,j}\otimes (U_{s_2})_{-,\ell}
\end{equation*}
for all columns of $U_t$, where $t\in T_{d}$ is a non-leaf node with $\mathrm{sons}(t) = \{s_1,s_2\}$.

Then for matrices $X\in\mathbb{R}^{k_t\times k_t}$, $Y\in\mathbb{R}^{k_{s_1}\times k_{s_1}}$, 
$Z\in\mathbb{R}^{k_{s_2}\times k_{s_2}}$, where $Y,Z$ are invertible, the transformations
$U_t\leftarrow U_t X$, $U_{s_1}\leftarrow U_{s_1} Y$ and $U_{s_2}\leftarrow U_{s_2} Z$ yield the transformed transfer tensor
\begin{equation*}
B_{t,\mathrm{new}} = (X^{\top},Y^{-1},Z^{-1})\circ B_t,\quad i.e.\quad (B_{t,\mathrm{new}})_{i,j,k}
= \sum_{\ell,m,n} X_{\ell,i}(Y^{-1})_{j,m}(Z^{-1})_{k,n}(B^{t})_{\ell, m, n}\, ,
\end{equation*}
such that
\begin{equation*}
(U_t X)_{-,i} = \sum_{j=1}^{k_{s_1}}\sum_{\ell = 1}^{k_{s_2}} (B_{t,\mathrm{new}})_{i,j,\ell}(U_{s_1}Y)_{-,j}\otimes
(U_{s_2}Z)_{-,\ell}.
\end{equation*}
\end{lemma}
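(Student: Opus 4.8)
The plan is to prove the identity by a direct computation: expand the transformed column $(U_t X)_{-,i}$ with the old transfer relation, rewrite the son columns $(U_{s_1})_{-,j}$ and $(U_{s_2})_{-,\ell}$ in terms of the transformed son frames $U_{s_1}Y$ and $U_{s_2}Z$ by inverting $Y$ and $Z$, and then collect coefficients to read off $B_{t,\mathrm{new}}$.

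First I would use linearity of the column operation: the $i$-th column of $U_t X$ is $(U_t X)_{-,i} = \sum_{p} X_{p,i}(U_t)_{-,p}$. Substituting the transfer relation for each $(U_t)_{-,p}$ gives $(U_t X)_{-,i} = \sum_{p}\sum_{j,\ell} X_{p,i}(B_t)_{p,j,\ell}\,(U_{s_1})_{-,j}\otimes (U_{s_2})_{-,\ell}$. Next, because $Y$ and $Z$ are invertible, I would express the original son columns through the transformed ones: from $(U_{s_1}Y)_{-,m} = \sum_j (U_{s_1})_{-,j}Y_{j,m}$ one obtains $(U_{s_1})_{-,j} = \sum_m (Y^{-1})_{m,j}(U_{s_1}Y)_{-,m}$, and analogously $(U_{s_2})_{-,\ell} = \sum_n (Z^{-1})_{n,\ell}(U_{s_2}Z)_{-,n}$. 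This is the only place where invertibility enters, and it explains why only $Y,Z$ (not $X$) must be invertible.

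Substituting these two expansions into the previous display and using bilinearity of the tensor product to pull the scalar coefficients out, the transformed column becomes $(U_tX)_{-,i} = \sum_{m,n}\Big(\sum_{p,j,\ell} X_{p,i}(B_t)_{p,j,\ell}(Y^{-1})_{m,j}(Z^{-1})_{n,\ell}\Big)(U_{s_1}Y)_{-,m}\otimes (U_{s_2}Z)_{-,n}$. After relabelling summation and column indices, the bracketed coefficient is exactly $(B_{t,\mathrm{new}})_{i,m,n} = \sum_{\ell,m',n'} X_{\ell,i}(Y^{-1})_{m,m'}(Z^{-1})_{n,n'}(B_t)_{\ell,m',n'}$, which is the claimed mode-wise product $(X^{\top},Y^{-1},Z^{-1})\circ B_t$. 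Reading this relation the other way around yields precisely the asserted transfer relation for the transformed frames, completing the proof.

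I do not expect a genuine obstacle; the computation is mechanical. The only thing to watch is index bookkeeping, namely keeping the row/column roles of $(Y^{-1})$ and $(Z^{-1})$ consistent and checking that the transpose lands on $X$ rather than on the son factors. The transpose appears automatically because the new index $i$ occupies the \emph{column} slot of $X_{p,i}$ in the expansion of $(U_tX)_{-,i}$, whereas for $Y$ and $Z$ the new indices occupy the row slots of the inverses.
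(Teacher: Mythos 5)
Your computation is correct: expanding $(U_tX)_{-,i}=\sum_p X_{p,i}(U_t)_{-,p}$, substituting the old transfer relation, rewriting $(U_{s_1})_{-,j}=\sum_m (Y^{-1})_{m,j}(U_{s_1}Y)_{-,m}$ and $(U_{s_2})_{-,\ell}=\sum_n (Z^{-1})_{n,\ell}(U_{s_2}Z)_{-,n}$, and collecting coefficients reproduces exactly $(B_{t,\mathrm{new}})_{i,j,k}=\sum_{\ell,m,n}X_{\ell,i}(Y^{-1})_{j,m}(Z^{-1})_{k,n}(B_t)_{\ell,m,n}$, and you correctly identify that invertibility is needed only for $Y,Z$ because only the son frames must be re-expanded in the transformed bases. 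The paper itself omits the proof and defers to the reference [Gr10], so there is nothing to compare against; your direct index computation is the standard argument and is complete.
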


Let us first suppose, we transform $U_t\leftarrow U_tQ_t$ in each non-root node $t\in T_{d}$, where 
$Q_t$ is the full orthogonal matrix of eigenvectors for $t\in T_{d}$ from above, i.e. 
$Q_t^{-1} = Q_t^{\top}$.
Then the transfer tensors would transform like
\begin{equation}\label{equation:transfertransformation}
B_{t,\mathrm{new}} = (Q_t^{\top},Q_{s_1}^{\top},Q_{s_2}^{\top})\circ B_t,
\end{equation}
according to Lemma~\ref{lemma:transfertransformation}, where $\mathrm{sons}(t) = \{s_1, s_2\}$. Truncating the tensor down to
ranks $(r_t)_{t\in T_{d}}$ means
leaving only the first $r_t$ columns of $Q_t$ in each non-root node $t\in T_{d}$ 
and removing the remaining ones (assuming the eigenvalues/singular values to be in descending order).
This can directly be carried out in \eqref{equation:transfertransformation} by first truncating the matrices
$Q_t$, $Q_{s_1}$, $Q_{s_2}$ and multiplying afterwards:
\begin{align}
& B_{t,\mathrm{new}} =\notag \\
& \bigl((Q_t\mid_{\{1,\ldots ,k_t\}\times \{1,\ldots ,r_t\}})^{\top},
(Q_{s_1}\mid_{\{1,\ldots ,k_{s_1}\}\times \{1,\ldots ,r_{s_1}\}})^{\top},
(Q_{s_2}\mid_{\{1,\ldots ,k_{s_2}\}\times \{1,\ldots ,r_{s_2}\}})^{\top}\bigr)\circ B_t.
\label{equation:transfertransformationtrunc}
\end{align}

For the two sons $t_1$ and $t_2$ of the root $D = \{1,\ldots ,d\}$, the matrices 
$\widehat{B}_t = (V_t)^{\top}V_t\in\mathbb{R}^{k_t\times k_t}$ are just
\begin{equation}\label{equation:acctransferroot}
\widehat{B}_{t_1} = B_{D}(B_{D})^{\top}\qquad\mbox{and}\qquad
\widehat{B}_{t_2} = (B_{D})^{\top}B_{D}.
\end{equation}
For the sons $s_1$ and $s_2$ of a non-root node $t$, the computation of $\widehat{B}_{s_1}$ and
$\widehat{B}_{s_2}$ involves the matrix $\widehat{B}_t$ as well as the transfer tensor
$B_t$ (cf. \cite{Gr10}):
\begin{align}\label{equation:acctransferSon1}
(\widehat{B}_{s_1})_{p,q} &= \sum_{i_2=1}^{k_t}\sum_{\ell =1}^{k_{s_2}}\sum_{i_1=1}^{k_t}
(B_t)_{i_1,p,\ell}(\widehat{B}_t)_{i_1,i_2}(B_t)_{i_2,q,\ell} \quad \mbox{for}\quad
p,q = 1,\ldots ,k_{s_1}\quad\mbox{and} \\
\label{equation:acctransferSon2}
(\widehat{B}_{s_2})_{p,q} &= \sum_{i_2=1}^{k_t}\sum_{j=1}^{k_{s_1}}\sum_{i_1=1}^{k_t}
(B_t)_{i_1,j,p}(\widehat{B}_t)_{i_1,i_2}(B_t)_{i_2,j,q}\quad\mbox{for}\quad
p,q = 1,\ldots ,k_{s_2}\; .
\end{align}
Notice that the equations from \eqref{equation:acctransferroot} can be written as 
\eqref{equation:acctransferSon1} and \eqref{equation:acctransferSon2} if we set
$\widehat{B}_D = 1\in\mathbb{R}^{1\times 1}$.

The computation
of the matrices $\widehat{B}_{t}$ can thus be parallelized level-wise with respect to the tree 
$T_{d}$, where the computations start
at the root node, which computes both matrices from \eqref{equation:acctransferroot} and sends them to its sons $t_1$ and $t_2$.
Subsequently any non-root node $t\in T_{d}$ receives the matrix $\widehat{B}_{t}$ from its father and,
if $t$ is not a leaf, computes the matrices $\widehat{B}_{s_1}$ and $\widehat{B}_{s_2}$ for its sons $s_1$ and $s_2$ according to \eqref{equation:acctransferSon1} and \eqref{equation:acctransferSon2}.

Once the matrices $\widehat{B}_{t}$ are computed for all non-root nodes $t\in T_{d}$, the truncation can be 
started on the highest level of $T_{d}$. Again, all nodes of one level work entirely independent of each other:
\begin{enumerate}
\item For all leaves $\{\mu\}$, $\mu = 1,\ldots ,d$: Compute the eigenvectors $Q_{\{\mu\}}$ of $\widehat{B}_{\{\mu\}}$
together with the corresponding eigenvalues. Overwrite the frames $U_{\{\mu\}}$ with the truncated singular vectors:
$U_{\{\mu\}}\leftarrow U_{\{\mu\}}\bigl(Q_{\{\mu\}}\mid _{\{1,\ldots ,k_{\{\mu\}}\}\times\{1,\ldots ,r_{\{\mu\}}\}}\bigr)$.
Send the truncated eigenvector matrix $Q_{\{\mu\}}\mid _{\{1,\ldots ,k_{\{\mu\}}\}\times\{1,\ldots ,r_{\{\mu\}}\}}$ to the father.
\item For all inner nodes $t\in T_{d}$ with $\mathrm{sons}(t) = \{s_1,s_2\}$ receive the truncated matrices
$Q_{s_1}\mid_{\{1,\ldots ,k_{s_1}\}\times \{1,\ldots ,r_{s_1}\}}$ and 
$Q_{s_2}\mid_{\{1,\ldots ,k_{s_2}\}\times \{1,\ldots ,r_{s_2}\}}$
from the sons $s_1$ and $s_2$, compute $Q_t$, truncate it to
$Q_t\mid_{\{1,\ldots ,k_t\}\times \{1,\ldots ,r_{t}\}}$, send the truncated matrix to the father, 
and update the transfer tensor $B_{t}$ according to 
\eqref{equation:transfertransformationtrunc}.
\item For the root $D = \{1,\ldots ,d\}$: Receive the truncated matrices
$Q_{t_1}\mid_{\{1,\ldots ,k_{t_1}\}\times \{1,\ldots ,r_{t_1}\}}$ and 
$Q_{t_2}\mid_{\{1,\ldots ,k_{t_2}\}\times \{1,\ldots ,r_{t_2}\}}$
from the sons $t_1$, $t_2$ of $D$ and transform $B_D$ like
\begin{equation*}
B_D\leftarrow Q_{t_1}^{\top}B_D Q_{t_2}.
\end{equation*}
\end{enumerate}
This shows that the truncation of $\mathcal{H}$-tensors down to smaller 
ranks can be parallelized level-wise. 

Note that we assumed the columns of $U_t$ to be orthonormal for all non-root nodes $t\in T_{d}$
in order to get the matrices $\widehat{B}_t$ for each non-root node $t$ by 
\eqref{equation:acctransferroot}, \eqref{equation:acctransferSon1} and 
\eqref{equation:acctransferSon2} (cf. \cite{Gr10}).
Furthermore we can only take $U_t Q_t$ as left singular vectors of $\mathcal{M}_t(A)$ if the columns of $U_t$ are orthonormal. If this is not fulfilled, the tensor must be 
orthogonalized first, cf. Section~\ref{subsection:orthogonalization}.

Further notice that after a truncation the columns of $U_t$, $t\in T_d\setminus \{D\}$, 
will probably no longer be
orthonormal (one can easily construct counterexamples where $B_{t,\mathrm{new}}$ from
\eqref{equation:transfertransformationtrunc} does not fulfill the respective condition, i.e. the
columns of $\mathcal{M}_{2,3}(B_{t,\mathrm{new}})$ are not orthonormal). 
This, however, does not affect the truncation algorithm which first computes 
the matrices $\widehat{B}_t$ and the corresponding eigenvectors $Q_t$ 
for all non-root nodes $t\in T_d$
and afterwards starts truncating.
Imagine the truncation to be processed from the root to the leaves,
then each non-root node $t\in T_d$ is first truncated, before its orthogonality gets
destroyed by the transformations induced by its sons. 
Since the order of the multiplications in
\eqref{equation:transfertransformationtrunc} does not play a role, we may also start the truncations
 at the leaf nodes.

\subsection{The orthogonalization of $\mathcal{H}$-tensors}\label{subsection:orthogonalization}
For the truncation of an $\mathcal{H}$-tensor $A\in\mathbb{R}^{\mathcal{I}}$ down to smaller Hierarchical 
ranks, 
as described in Section~\ref{subsection:truncation}, the frames 
$U_t\in\mathbb{R}^{\mathcal{I}_t\times k_t}$ are required to have orthonormal columns for all non-root nodes 
$t$ of the underlying dimension tree $T_{d}$. An $\mathcal{H}$-tensor $A\in\mathbb{R}^{\mathcal{I}}$ fulfilling this property, 
is called \textit{orthogonal}:
\begin{definition}[Orthogonal frames, orthogonal $\mathcal{H}$-tensor]
Let $A\in\mathbb{R}^{\mathcal{I}}$ be an $\mathcal{H}$-tensor with underlying dimension tree $T_{d}$.

A frame $U_t$ is called orthogonal, if its columns are orthonormal.

The $\mathcal{H}$-tensor $A$ is called orthogonal,
if each non-root frame $U_t$, $t\in T_{d}\setminus\{\{1,\ldots ,d\}\}$ is orthogonal.
\end{definition}

For the leaves $\{\mu\}$, $\mu = 1,\ldots ,d$, the frames $U_{\{\mu\}}$ are stored explicitly, i.e. we can directly compute a
reduced QR-decomposition $U_{\{\mu\}} = Q_{\{\mu\}}R_{\{\mu\}}$, keep the orthogonal factor $Q_{\{\mu\}}$ and adjust the transfer
tensor of the father according to Lemma~\ref{lemma:transfertransformation}. For $t = \{\mu ,\nu\}$ we do
\begin{equation*}
U_{\{\mu\}}\leftarrow Q_{\{\mu\}} \;\bigl(=U_{\{\mu\}}R_{\{\mu\}}^{-1}\bigr)\qquad\mbox{and}\qquad
U_{\{\nu\}}\leftarrow Q_{\{\nu\}} \;\bigl(=U_{\{\nu\}}R_{\{\nu\}}^{-1}\bigr)
\end{equation*}
and update
\begin{equation*}
B_{t}\leftarrow (I,R_{\{\mu\}},R_{\{\nu\}})\circ B_t,
\end{equation*}
where $I\in\mathbb{R}^{k_t\times k_t}$ is the unit matrix.

Now consider an inner node $t\in T_{d}$, where the frames $U_{s_1}$ and $U_{s_2}$ of the 
sons $s_1$, $s_2$ have already been orthogonalized. Then $U_t$ being orthogonal is equivalent to 
\begin{equation*}
\sum_{j = 1}^{k_{s_1}}\sum_{\ell = 1}^{k_{s_2}}(B_{t})_{i,j,\ell}(B_t)_{i^{\prime},j,\ell} = \delta_{i,i^{\prime}}
\qquad\mbox{with the Kronecker delta}\;\delta_{i,i^{\prime}},
\end{equation*}
i.e. the orthogonality of the matrix $\mathcal{M}_{\{2,3\}}(B_{t})$ (cf. Definition~\ref{definition:matricization}).
For the inner nodes $t\in T_{d}$ we can thus compute a reduced QR-decomposition 
of the matrix $\mathcal{M}_{\{2,3\}}(B_t)$, i.e.
\begin{equation}\label{equation:qrtransfer}
(B_{t})_{i,j,\ell} = \sum_{q = 1}^{k} (Q_t)_{(j,\ell),q}(R_t)_{q,i}\; ,\qquad\mbox{where}\quad k = 
\min\{k_t\, ,\,k_{s_1}\cdot k_{s_2}\},
\end{equation}
and then update the transfer tensor $B_{f}$ of the father according to Lemma~\ref{lemma:transfertransformation}.

Altogether we have the following algorithm for the orthogonalization of an $\mathcal{H}$-tensor $A\in\mathbb{R}^{\mathcal{I}}$:
\begin{enumerate}
\item For all leaves $\{\mu\}$, $\mu = 1,\ldots ,d$: Compute a reduced QR-decomposition $U_{\{\mu\}} = Q_{\{\mu\}}R_{\{\mu\}}$
and overwrite $U_{\{\mu\}}\leftarrow Q_{\{\mu\}}$. Send the right factor $R_{\{\mu\}}$ to the father.
\item For all inner nodes $t\in T_{d}$ with $\mathrm{sons}(t) = \{s_1,s_2\}$ receive the factors
$R_{s_1}$ and $R_{s_2}$ from the sons and update the transfer tensor: $B_{t}\leftarrow (I,R_{s_1},R_{s_2})\circ B_{t}$.
Then compute a reduced QR-decomposition of $\mathcal{M}_{\{2,3\}}(B_{t})$ like \eqref{equation:qrtransfer} and store
$(B_t)_{q,j,\ell}\leftarrow (Q_t)_{(j,\ell),q}$, with the indices named as in \eqref{equation:qrtransfer}.
The factor $R_t$ is sent to the father.
\item For the root $D = \{1,\ldots ,d\}$: Receive the factors $R_{t_1}$, $R_{t_2}$ of the sons $t_1$, $t_2$ and update the
transfer tensor $B_{D} \leftarrow R_{t_1}B_{D}R_{t_2}^{\top}$.
\end{enumerate}
During the algorithm each non-leaf node $t\in T_{d}$ requires the right factors $R_{s_1}$ and $R_{s_2}$ of its
sons, but works independently of all other nodes on its level. Therefore the orthogonalization of an $\mathcal{H}$-tensor 
can be parallelized level-wise with respect to the underlying tree $T_{d}$.

Depending on the maximal rank $k:=\max\{k_t\mid t\in T_d\}$, the complexity on each node can be estimated by
$\mathcal{O}(k^4)$, since the complexity of the QR decomposition of $\mathcal{M}_{\{2,3\}}(B_t)$ can be
estimated by $\mathcal{O}(k_t^2 k_{s_1} k_{s_2})$ and the complexity of the multiplication
$B_t\leftarrow (I,R_{s_1},R_{s_2})\circ B_t$ can be estimated by 
$\mathcal{O}(k_t k_{s_1}^2 k_{s_2}  + k_t k_{s_1}k_{s_2}^2 )$.
However, in our runtime tests the complexity seems to behave more like $\mathcal{O}(k^3)$, which might be due
to optimizations in the LAPACK routines which we are using.

\subsection{The addition of two $\mathcal{H}$-tensors}\label{subsection:addition}
For the sum $A+C$ of two $\mathcal{H}$-tensors $A,C\in\mathbb{R}^{\mathcal{I}}$, based on the same dimension tree 
$T_{d}$,
\begin{equation*}
(A+C)_{(i_1,\ldots ,i_d)} = A_{(i_1,\ldots ,i_d)} + C_{(i_1,\ldots ,i_d)}
\qquad\mbox{for}\quad (i_1,\ldots ,i_d)\in\mathcal{I}\;,
\end{equation*}
we assume $A$ and $C$ to be distributed among the same compute nodes, i.e. the data of $A$, which belongs to a certain node
of the underlying tree $T_{d}$, is stored on the same compute node as the corresponding data of $C$.
Due to this assumption we do not need any communication for the assembling of the tensor $A+C$ in the Hierarchical format.
Otherwise the respective nodes would have to communicate. The assembling of $A+C$ by itself (without subsequent truncation) 
does not even require any 
arithmetic operations, as the following Lemma~\ref{lemma:sum} illustrates, a proof of which can
e.g. be found in \cite{Ha12}.
\begin{lemma}[The sum of two $\mathcal{H}$-tensors]\label{lemma:sum}
Let $A\in\mathbb{R}^{\mathcal{I}}$ and $C\in\mathbb{R}^{\mathcal{I}}$ be two $\mathcal{H}$-tensors, based on the same dimension
tree $T_{d}$. For $A$ we use the previous notation $B_{t}$ for the transfer tensors and
$U_t$ for the frames. The respective objects of $C$ are denoted by $F_{t}$ and $V_t$. 
Furthermore we write $(k_t)_{t\in T_{d}}$ for the Hierarchical ranks of $A$ and $(\bar{k}_t)_{t\in T_{d}}$
for those of $C$. For a non-leaf node $t$ we write $\mathrm{sons}(t) = \{s_1,s_2\}$.

Then the sum $A+C$ can again be represented in the Hierarchical Tucker format with transfer tensors 
$H_{t}\in\mathbb{R}^{(k_{t} + \bar{k}_t)\times (k_{s_1} + \bar{k}_{s_1})\times (k_{s_2} + \bar{k}_{s_2})}$ 
for all inner nodes $t\in T_{d}$,
$H_{D}\in\mathbb{R}^{(k_{t_1}+\bar{k}_{t_1})\times (k_{t_2}+\bar{k}_{t_2})}$ for the root $D = \{1,\ldots ,d\}$, where $\mathrm{sons}(D) = \{t_1,t_2\}$,
and leaf frames
$W_{\{\mu\}}\in\mathbb{R}^{\mathcal{I}_{\{\mu\}}\times (k_{\{\mu\}} + \bar{k}_{\{\mu\}})}$, $\mu = 1,\ldots ,d $, 
defined as follows:
\begin{align} \label{equation:sumleafframes}
(W_{\{\mu\}})_{-,j} & =
\left\{
\begin{array}{ccl}
(U_{\{\mu\}})_{-,j} & , & 1\leq j\leq k_{\{\mu\}} \\
(V_{\{\mu\}})_{-,(j-k_{\{\mu\}})} & , & k_{\{\mu\}} + 1\leq j\leq k_{\{\mu\}} + \bar{k}_{\{\mu\}}
\end{array}
\right.
\mbox{for}\; \mu = 1,\ldots ,d, 
\\ \notag
(H_t)_{i,j,\ell} & =
\left\{
\begin{array}{ccl}
(B_t)_{i,j,\ell} & , & 
\left\{
\begin{array}{l}
1\leq i\leq k_t\; , \\
1\leq j\leq k_{s_1}\; , \\
1\leq \ell\leq k_{s_2} 
\end{array}
\right. \\
(F_t)_{i-k_t, j-k_{s_1}, \ell - k_{s_2}} & , & 
\left\{
\begin{array}{l}
k_t +1\leq i\leq k_t + \bar{k}_t\; ,\\
k_{s_1}+1\leq j\leq k_{s_1} + \bar{k}_{s_1}\; , \\
k_{s_2}+1 \leq \ell\leq k_{s_2} + \bar{k}_{s_2}
\end{array}
\right. \\
0 & , & \mbox{otherwise}
\end{array}
\right. t\in T_{d}\;\mbox{inner node,} \\ \notag
(H_D)_{j,\ell} &= 
\left\{
\begin{array}{ccl}
(B_D)_{j,\ell} & , & 1\leq j\leq k_{t_1}\; \mbox{and}\;\; 1\leq \ell\leq k_{t_2} \\
(F_D)_{j-k_{t_1},\ell -k_{t_2}} & , & k_{t_1}+1\leq j\leq k_{t_1}+\bar{k}_{t_1} \; \mbox{and}\;\;
  k_{t_2} + 1\leq \ell\leq k_{t_2}+\bar{k}_{t_2}\, \\
  0 & ,  & \mbox{otherwise}.
\end{array}
\right.
\end{align}
\end{lemma}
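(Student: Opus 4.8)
The plan is to verify directly that the block construction \eqref{equation:sumleafframes} reproduces the tensor $A+C$, by showing that the frames implicitly generated by the new data $(H_t)$ and $(W_{\{\mu\}})$ are exactly the column-wise concatenations of the frames of $A$ and of $C$. Concretely, I would prove by induction over the levels of $T_d$, starting from the leaves, the claim that for every non-root node $t\in T_d$ the frame $W_t$ defined recursively from the new data via the transfer relation \eqref{equation:transfer} satisfies
\begin{equation*}
W_t = \bigl[\, U_t \;\; V_t \,\bigr],
\end{equation*}
i.e. its first $k_t$ columns are the columns of $U_t$ and its last $\bar{k}_t$ columns are those of $V_t$.

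The base case is immediate: at a leaf $\{\mu\}$ the defining formula for $W_{\{\mu\}}$ in \eqref{equation:sumleafframes} is exactly this concatenation. For the inductive step, consider an inner node $t$ with $\mathrm{sons}(t)=\{s_1,s_2\}$ and assume the claim holds for $s_1$ and $s_2$. I would then split the column index $i$ of $W_t$ into the two ranges $1\le i\le k_t$ and $k_t+1\le i\le k_t+\bar{k}_t$. In the first range the block structure of $H_t$ forces $(H_t)_{i,j,\ell}$ to vanish unless $j\le k_{s_1}$ and $\ell\le k_{s_2}$, where it equals $(B_t)_{i,j,\ell}$; in that same sub-range the induction hypothesis gives $(W_{s_1})_{-,j}=(U_{s_1})_{-,j}$ and $(W_{s_2})_{-,\ell}=(U_{s_2})_{-,\ell}$, so the transfer relation collapses to the definition of $(U_t)_{-,i}$ from \eqref{equation:transfer}. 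Symmetrically, in the second range only the $F_t$-block survives and, after the index shifts $i\mapsto i-k_t$, $j\mapsto j-k_{s_1}$, $\ell\mapsto \ell-k_{s_2}$, the relation collapses to the definition of $(V_t)_{-,i-k_t}$ from \eqref{equation:transfer2}. This establishes the claim for $t$.

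Finally, at the root $D$ the transfer tensor is the $2$-dimensional block-diagonal matrix $H_D$. Applying the (matrix version of the) transfer relation to $W_{t_1}=[\,U_{t_1}\;\;V_{t_1}\,]$ and $W_{t_2}=[\,U_{t_2}\;\;V_{t_2}\,]$, the off-diagonal zero blocks of $H_D$ eliminate every mixed term coupling a $U$-column with a $V$-column, leaving exactly
\begin{equation*}
W_D = \sum_{j,\ell}(B_D)_{j,\ell}\,(U_{t_1})_{-,j}\otimes(U_{t_2})_{-,\ell} \;+\; \sum_{j,\ell}(F_D)_{j,\ell}\,(V_{t_1})_{-,j}\otimes(V_{t_2})_{-,\ell} = U_D + V_D.
\end{equation*}
Since $U_D=\mathcal{M}_D(A)$ and $V_D=\mathcal{M}_D(C)$ are the rearrangements of $A$ and $C$ into one long column, this equals $\mathcal{M}_D(A+C)$ by linearity of the matricization, which proves the lemma.

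The content is entirely bookkeeping; there is no genuine analytic difficulty. The one point to be careful about is the case analysis on the index ranges and, in particular, verifying that the off-diagonal blocks of $H_t$ (and of $H_D$) vanish, so that the tensor-product construction never couples a column of $U$ with a column of $V$. It is precisely this decoupling --- the block-\emph{diagonal} placement of $B_t$ and $F_t$ together with the side-by-side placement in the leaf frames --- that keeps the two summands separated all the way up the tree and lets them combine additively only at the root.
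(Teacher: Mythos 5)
Your proof is correct: the level-wise induction showing that the recursively generated frames satisfy $W_t = [\,U_t\;\;V_t\,]$, with the block structure of $H_t$ preventing any coupling between $U$- and $V$-columns until the root, is exactly the standard argument. The paper itself does not prove this lemma but defers to the reference [Ha12], where essentially this same verification is carried out, so there is nothing to contrast; the only cosmetic remark is that the resulting ranks $k_t+\bar{k}_t$ are representation ranks and need not be minimal, which the paper's notion of Hierarchical representation rank already accommodates.
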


According to Lemma~\ref{lemma:sum} the sum $A+C$ is built by just putting both $\mathcal{H}$-tensors $A$ and $C$ together
(cf. Fig.~\ref{figure:sum}). This process can be run in parallel on each node of the tree and does not need any communication,
under the above assumption.
\begin{figure}
\begin{tikzpicture}[scale=1.2]
\node (U1)  {\leafframeCol{blue!35}};
\node (U2)      [right of = U1, xshift = 0.2cm] {\leafframeCol{blue!35}};
\node (U3)      [right of = U2, xshift = 1.0cm] {\leafframeCol{blue!35}};
\node (U4)      [right of = U3, xshift = 0.2cm] {\leafframeCol{blue!35}};
\node (B12)     [above right of = U1, yshift = 1.0cm] {\transfertensorCol{blue!35}} edge (U1) edge (U2);
\node (B34)     [above right of = U3, yshift = 1.0cm] {\transfertensorCol{blue!35}} edge (U3) edge (U4);
\node (B1234)   [above right of = B12, xshift = 1.0cm, yshift = 1.0cm] {\roottransfertensorCol{blue!35}} edge (B12) edge (B34);
\node (A)       [right of = B1234, xshift = 1.0cm, yshift = -0.3cm, fill=blue!15] {$A\in\mathbb{R}^{\mathcal{I}}$};
\node (V1)      [right of = U4, xshift = 1.5cm] {\leafframeCol{red!35}};
\node (V2)      [right of = V1, xshift = 0.2cm] {\leafframeCol{red!35}};
\node (V3)      [right of = V2, xshift = 1.0cm] {\leafframeCol{red!35}};
\node (V4)      [right of = V3, xshift = 0.2cm] {\leafframeCol{red!35}};
\node (F12)     [above right of = V1, yshift = 1.0cm] {\transfertensorCol{red!35}} edge (V1) edge (V2);
\node (F34)     [above right of = V3, yshift = 1.0cm] {\transfertensorCol{red!35}} edge (V3) edge (V4);
\node (F1234)   [above right of = F12, xshift = 1.0cm, yshift = 1.0cm] {\roottransfertensorCol{red!35}} edge (F12) edge (F34);
\node (C)       [right of = F1234, xshift = 1.0cm, yshift = -0.3cm, fill=red!15] {$C\in\mathbb{R}^{\mathcal{I}}$};
\node (plus)    [right of = B34, xshift = 1.3cm] {\Huge $+$};
\node (W1)      [below right of = U4, xshift = -2.3cm, yshift = -5.0cm] {\leafframeAdd{blue!35}{red!35}};
\node (W2)      [right of = W1, xshift = 0.5cm] {\leafframeAdd{blue!35}{red!35}};
\node (W3)      [right of = W2, xshift = 1.3cm] {\leafframeAdd{blue!35}{red!35}};
\node (W4)      [right of = W3, xshift = 0.5cm] {\leafframeAdd{blue!35}{red!35}};
\node (H12)     [above right of = W1, yshift = 1.5cm] {\transfertensorAdd{blue!35}{red!35}} edge (W1) edge (W2);
\node (H34)     [above right of = W3, yshift = 1.5cm] {\transfertensorAdd{blue!35}{red!35}} edge (W3) edge (W4);
\node (H1234)   [above right of = H12, xshift = 1.25cm, yshift = 1.4cm] {\roottransfertensorAdd{blue!35}{red!35}} edge (H12) edge (H34);
\node (equals)    [left of = H12, xshift = -1.5cm] {\Huge $=$};
\end{tikzpicture}
\caption{The sum of two $\mathcal{H}$-tensors $A,C\in\mathbb{R}^{\mathcal{I}}$ is assembled by merging $A$ and $C$ together,
 cf. Lemma~\ref{lemma:sum}. The white spaces in the transfer tensors represent zero blocks. 
 The sum can be assembled in parallel on each node of the tree.}
\label{figure:sum}
\end{figure}
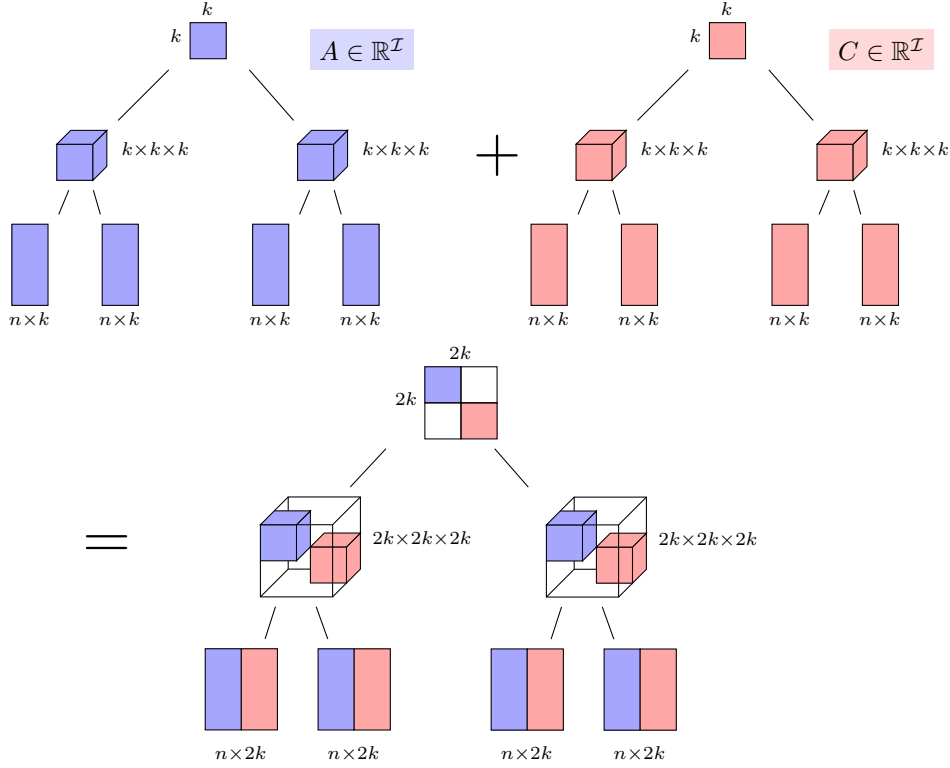
In the end, since we do not have to compute anything for the addition, we can of course let the tensors $A$ and $C$ be
stored, as they are, and just modify the evaluation routine to evaluate $A+C$ for a given index $(i_1,\ldots ,i_d)$ 
instead of $A$ or $C$.

However, if a large number of additions is needed (eg. for some iterative method), we would like to truncate the sum back
to lower ranks (cf. Section~\ref{subsection:truncation}), to save storage. This can be done either 
with respect to prescribed
ranks $(k_t)_{t\in T_{d}}$ or for prescribed accuracy $\varepsilon$ of the truncated tensor.

\subsection{The application of an operator to an $\mathcal{H}$-tensor}\label{subsection:operatorApp}
Let $\mathcal{I} = \mathcal{I}_1\times\cdots\times\mathcal{I}_d$ and $\mathcal{J} = \mathcal{J}_1\times\cdots\times\mathcal{J}_d$
be two product index sets of dimension $d\in\mathbb{N}$ and consider a tensor $A\in\mathbb{R}^{\mathcal{I}}$ as well as a matrix
$\mathcal{L}\in\mathbb{R}^{\mathcal{J}\times\mathcal{I}}$. The matrix $\mathcal{L}$ can as well be seen as a tensor of dimension
$d$ by ordering the dimensions like
\begin{equation}\label{equation:operatorastensor}
\mathcal{L}\in\mathbb{R}^{(\mathcal{J}_1\times\mathcal{I}_1)\times\cdots \times(\mathcal{J}_d\times\mathcal{I}_d)}\; ,\quad\mbox{i.e.}\quad
\mathcal{L}_{(j_1,i_1),\ldots ,(j_d,i_d)}\quad\mbox{instead of}\quad \mathcal{L}_{(j_1,\ldots ,j_d),(i_1,\ldots ,i_d)}\; ,
\end{equation}
and regarding each of the indices $(j_{\mu},i_{\mu})$, $\mu = 1,\ldots ,d$, as one index.
We assume both tensors $\mathcal{L}$ and $A$ to be represented in the Hierarchical Tucker format and call them
$\mathcal{H}$\textit{-operator} and $\mathcal{H}$-tensor.
Then the tensor $C:=\mathcal{L}A\in\mathbb{R}^{\mathcal{J}}$, i.e.
\begin{equation}\label{equation:operator}
C_{(j_1,\ldots ,j_d)} = \sum_{(i_1,\ldots ,i_d)}\mathcal{L}_{(j_1,i_1),\ldots ,(j_d,i_d)}A_{(i_1,\ldots ,i_d)}\; ,
\end{equation}
can be computed directly in the Hierarchical Tucker format. 
The following Lemma~\ref{lemma:operator} describes the representation of the $\mathcal{H}$-tensor $C:=\mathcal{L}A$. A proof can be found in \cite{Ha12}.
\begin{lemma}[The application of an $\mathcal{H}$-operator to an $\mathcal{H}$-tensor]\label{lemma:operator}
Let $\mathcal{I}=\mathcal{I}_1\times\cdots\times\mathcal{I}_d$ and $\mathcal{J}=\mathcal{J}_1\times\cdots\times\mathcal{J}_d$ 
be two product index sets of dimension $d$ and let $A\in\mathbb{R}^{\mathcal{I}}$ be an $\mathcal{H}$-tensor and 
$\mathcal{L}\in\mathbb{R}^{\mathcal{J}\times \mathcal{I}}$ an $\mathcal{H}$-operator. Regarding $\mathcal{L}$ as well as
tensor of dimension $d$, cf. \eqref{equation:operatorastensor}, we suppose $A$ and $\mathcal{L}$ to be both represented in the
Hierarchical Tucker format with the same underlying tree $T_d$. 
For $A$ we use the previous notation $B_t$ for the transfer tensors and $U_t$ for the frames. The respective objects of
$\mathcal{L}$ are denoted by $H_t$ and $W_t$. Furthermore we write $(k_t)_{t\in T_{d}}$ for the Hierarchical ranks of
of $A$ and $(\bar{k}_t)_{t\in T_d}$ for those of $\mathcal{L}$. For a non-leaf node $t$ we 
write $\mathrm{sons}(t) = \{s_1,s_2\}$.

Then the tensor $C:=\mathcal{L}A\in\mathbb{R}^{\mathcal{J}}$ can again be represented in the Hierarchical Tucker format
with transfer tensors $F_t\in\mathbb{R}^{k_t\bar{k}_t\times k_{s_1}\bar{k}_{s_1}\times k_{s_2}\bar{k}_{s_2}}$ for all
non-leaf nodes $t\in T_{d}$ ($k_D = \bar{k}_D = 1$ for the root $D$), and leaf frames 
$V_{\{\mu\}}\in\mathbb{R}^{\mathcal{J}_{\mu}\times k_{\{\mu\}}\bar{k}_{\{\mu\}}}$, $\mu = 1,\ldots ,d$, defined as follows:
\begin{align}\notag
F_{t} & = H_{t}\otimes B_{t} \; ,\quad\mbox{i.e.}\quad (F_t)_{(p,i),(q,j),(r,\ell)} = (H_t)_{p,q,r}(B_t)_{i,j,\ell} \\ \notag
 & \quad (i = 1,\ldots ,k_t,\; j = 1,\ldots ,k_{s_1},\; \ell = 1,\ldots ,k_{s_2},\; \\ \notag
 &\;\quad p = 1,\ldots ,\bar{k}_{t},\; q = 1,\ldots ,\bar{k}_{s_1},\; r = 1,\ldots ,\bar{k}_{s_2}) \qquad\mbox{and}\\ 
 \label{equation:operatorleafframes}
(V_{\{\mu\}})_{j_{\mu},(r ,\ell)} & = \sum_{i_{\mu}\in\mathcal{I}_{\mu}}(W_{\{\mu\}})_{(j_{\mu},i_{\mu}),r}\cdot (U_{\{\mu\}})_{i_{\mu},\ell} \\ \notag
 &\quad (\ell = 1,\ldots ,k_{\{\mu\}} ,\; r = 1,\ldots ,\bar{k}_{\{\mu\}}).
\end{align}
\end{lemma}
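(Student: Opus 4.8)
The plan is to exhibit, for \emph{every} node $t\in T_d$ (not just the leaves), an explicit frame $V_t$ for $C=\mathcal{L}A$, and then to check that the leaf frames \eqref{equation:operatorleafframes} together with the transfer tensors $F_t=H_t\otimes B_t$ reproduce exactly these $V_t$ when the transfer relation \eqref{equation:transfer} is iterated from the leaves up to the root. The natural candidate applies the ``operator columns'' of $\mathcal{L}$ to the ``tensor columns'' of $A$: using the reordering \eqref{equation:operatorastensor}, which regards the $\mu$-th index of $\mathcal{L}$ as the pair $(j_\mu,i_\mu)$, set
\begin{equation*}
(V_t)_{\mathbf{j}_t,(p,i)} = \sum_{\mathbf{i}_t\in\mathcal{I}_t}(W_t)_{(\mathbf{j}_t,\mathbf{i}_t),p}\,(U_t)_{\mathbf{i}_t,i}.
\end{equation*}
For the leaves this is precisely \eqref{equation:operatorleafframes}, and for the root $D=\{1,\ldots,d\}$, where $k_D=\bar{k}_D=1$ and $U_D$, $W_D$ are the single-column reshapings of $A$ and $\mathcal{L}$, it collapses to $(V_D)_{\mathbf{j},(1,1)}=\sum_{\mathbf{i}}\mathcal{L}_{(\mathbf{j},\mathbf{i})}A_{\mathbf{i}}=C_{\mathbf{j}}$ by \eqref{equation:operator}. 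Hence, once I know the recursion built from the stated leaf frames and transfer tensors yields these $V_t$, its value $V_D$ at the root is $C$, which is exactly the representation claimed.

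The core is an induction up the tree establishing, for each inner node $t$ with $\mathrm{sons}(t)=\{s_1,s_2\}$ and $t=s_1\dot{\cup}s_2$, the transfer relation
\begin{equation*}
(V_t)_{-,(p,i)} = \sum_{q,j}\sum_{r,\ell}(F_t)_{(p,i),(q,j),(r,\ell)}\,(V_{s_1})_{-,(q,j)}\otimes(V_{s_2})_{-,(r,\ell)},
\end{equation*}
where the Kronecker product is over the split $\mathcal{J}_t=\mathcal{J}_{s_1}\times\mathcal{J}_{s_2}$. First I would substitute into the definition of $V_t$ the relation \eqref{equation:transfer} for $A$ and the corresponding relation for $\mathcal{L}$; because $\mathcal{L}$ is represented over the combined index sets $\mathcal{K}_\mu=\mathcal{J}_\mu\times\mathcal{I}_\mu$, its relation reads
\begin{equation*}
(W_t)_{(\mathbf{j}_t,\mathbf{i}_t),p} = \sum_{q,r}(H_t)_{p,q,r}(W_{s_1})_{(\mathbf{j}_{s_1},\mathbf{i}_{s_1}),q}(W_{s_2})_{(\mathbf{j}_{s_2},\mathbf{i}_{s_2}),r},
\end{equation*}
with the Kronecker product over $\mathcal{K}_t=\mathcal{K}_{s_1}\times\mathcal{K}_{s_2}$. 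After substitution the summation over $\mathbf{i}_t=(\mathbf{i}_{s_1},\mathbf{i}_{s_2})$ factors, by Fubini, into a sum over $\mathbf{i}_{s_1}$ that pairs $(W_{s_1})_{\cdot,q}$ with $(U_{s_1})_{\cdot,j}$ and a sum over $\mathbf{i}_{s_2}$ that pairs $(W_{s_2})_{\cdot,r}$ with $(U_{s_2})_{\cdot,\ell}$; these two inner sums are precisely $(V_{s_1})_{\mathbf{j}_{s_1},(q,j)}$ and $(V_{s_2})_{\mathbf{j}_{s_2},(r,\ell)}$. What survives is $\sum_{q,r,j,\ell}(H_t)_{p,q,r}(B_t)_{i,j,\ell}(V_{s_1})_{\mathbf{j}_{s_1},(q,j)}(V_{s_2})_{\mathbf{j}_{s_2},(r,\ell)}$, i.e.\ the claimed relation with $(F_t)_{(p,i),(q,j),(r,\ell)}=(H_t)_{p,q,r}(B_t)_{i,j,\ell}$.

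I expect the only genuine obstacle to be the bookkeeping of the three index splits that meet at each inner node: the split $\mathcal{I}_t=\mathcal{I}_{s_1}\times\mathcal{I}_{s_2}$ governing $A$, the split $\mathcal{J}_t=\mathcal{J}_{s_1}\times\mathcal{J}_{s_2}$ governing $C$, and the interleaved split $\mathcal{K}_t=(\mathcal{J}_{s_1}\times\mathcal{I}_{s_1})\times(\mathcal{J}_{s_2}\times\mathcal{I}_{s_2})$ governing $\mathcal{L}$. The choice $F_t=H_t\otimes B_t$, with column index grouped as $(p,i)$ and row indices as $(q,j)$ and $(r,\ell)$, is designed exactly so that the $\mathcal{L}$- and $A$-factors separate after the reindexing of $\mathbf{i}_t$ and $\mathbf{j}_t$; confirming that these groupings are mutually consistent is the one point demanding care. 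Once it is checked, the induction closes immediately from the leaf base case \eqref{equation:operatorleafframes}, and evaluation at the root gives $V_D=C$, completing the proof.
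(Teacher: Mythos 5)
Your proposal is correct: defining $(V_t)_{\mathbf{j}_t,(p,i)}=\sum_{\mathbf{i}_t}(W_t)_{(\mathbf{j}_t,\mathbf{i}_t),p}(U_t)_{\mathbf{i}_t,i}$ at every node, checking the leaf and root cases, and substituting the two transfer relations so that the sum over $\mathbf{i}_t=(\mathbf{i}_{s_1},\mathbf{i}_{s_2})$ factors into the son frames is exactly the standard argument, and the computation closes as you describe with $F_t=H_t\otimes B_t$. The paper itself gives no proof and defers to the cited reference, whose proof proceeds along essentially these same lines, so there is nothing to contrast.
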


From Lemma~\ref{lemma:operator} it follows that the product $\mathcal{L}A$ of an $\mathcal{H}$-operator $\mathcal{L}$ and an
$\mathcal{H}$-tensor $A$ can be computed in parallel on each node $t\in T_d$ without any communication needed, assuming
all data of $A$ and $\mathcal{L}$ belonging to one tree node $t\in T_d$ to be stored on the same compute node.

Since the Hierarchical ranks of $C:=\mathcal{L}A$ are the products $k_t\bar{k}_t$ of the respective ranks $k_t$ and 
$\bar{k}_t$ of $A$ and $\mathcal{L}$, one would most likely want to truncate the tensor $C$ down to given ranks 
$(r_t)_{t\in T_d}$ or 
with respect to some prescribed accuracy $\varepsilon$.

Computing the product $F_t = H_t\otimes B_t$ (cf. Lemma~\ref{lemma:operator}) needs
$\mathcal{O}(k_t \bar{k}_t k_{s_1} \bar{k}_{s_1} k_{s_2}\bar{k}_{s_2})$ operations, i.e. the complexity 
can be estimated by $\mathcal{O}(k^6)$, $k:=\max\{k_t, \bar{k}_t\mid t\in T_d\}$, if the ranks of the 
$\mathcal{H}$-tensor $A$ and the $\mathcal{H}$-operator $\mathcal{L}$ are comparable in size.

\section{Runtime tests}\label{section:runtime}
In this section we present some runtime tests. On the one hand we analyze the parallel runtime of our algorithms 
for varying tensor dimension $d$.
Here we leave the size $n$ of each tensor dimension as well as the rank $k$ 
($k_t = k$ for all $t\in T_d\setminus\{1,\ldots ,d\}$) unchanged. In this case we expect
the parallel runtime to grow like $\mathcal{O}(\log(d))$ for the evaluation of tensor entries, the inner product of two 
$\mathcal{H}$-tensors, the orthogonalization and the truncation down to lower rank of an $\mathcal{H}$-tensor.

For the application of an 
$\mathcal{H}$-operator to an $\mathcal{H}$-tensor we would even expet the runtime to be independent of $d$, since the
algorithms can run in parallel on each node (cf. Section~\ref{section:arithmetic}).

Since the addition of two $\mathcal{H}$-tensors (without subsequent truncation) does not involve any floating point operation
at all (cf. Section~\ref{subsection:addition}), we do not cover it here. The addition of two $\mathcal{H}$-tensors together
with the truncation behaves like the truncation itself.

Furthermore we analyze the dependence of the parallel runtime on the tensor rank $k$. For that we leave $d$ and $n$ unchanged
while we vary the rank $k$. 
Here we expect an $\mathcal{O}(k^4)$ dependence for the inner product (cf. Section~\ref{subsection:innerproduct}), for the truncation of an orthogonalized $\mathcal{H}$-tensor (cf. Section~\ref{subsection:truncation}),
as well as for the orthogonalization of an $\mathcal{H}$-tensor 
(cf. Section~\ref{subsection:orthogonalization}).
For the evaluation of tensor entries we expect an $\mathcal{O}(k^3)$ complexity (cf. Section~\ref{subsection:evaluation}).
For the application of an $\mathcal{H}$-operator we expect $\mathcal{O}(k^6)$,
where also the ranks of the $\mathcal{H}$-operator are chosen to equal $k$ on each node except for the root
node (cf. Sections~\ref{subsection:operatorApp}).

\subsection{Parallel runtime for varying tensor dimension $d$}
We choose random $\mathcal{H}$-tensors where each tensor dimension is of size $n = 10\,000$ and all ranks are of size $k = 100$, i.e.
we choose the transfer tensors and the leaf frames as random tensors/matrices.
The choice of $n = 10\,000$ and $k = 100$ has initially been made to have the same amount 
of data on each
compute node (except for the root node): $n\cdot k = k^3$, cf. \eqref{equation:storageComplexity}.
However, as long as we choose the rank $k$ large enough, we observe the same behavior of the 
parallel runtime.
The experiments demonstrate that the parallel runtime of our algorithms scales as expected for the evaluation of tensor entries,
the inner product of two $\mathcal{H}$-tensors and for the truncation down to lower rank of an $\mathcal{H}$-tensor
including the orthogonalization.
(see Fig~\ref{figure:runtime1}).
\begin{figure}
\begin{subfigure}[b]{0.48\textwidth}
\input{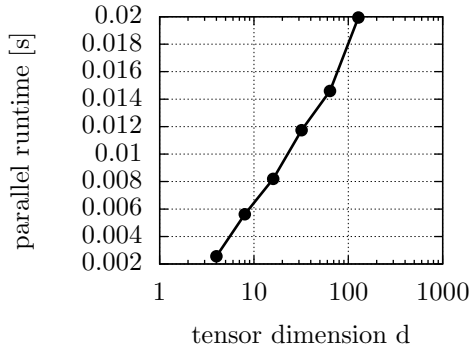}
\caption{Evaluation of one tensor entry.} 
\label{figure:plot-evaluation}
\end{subfigure}\quad
\begin{subfigure}[b]{0.48\textwidth}
\input{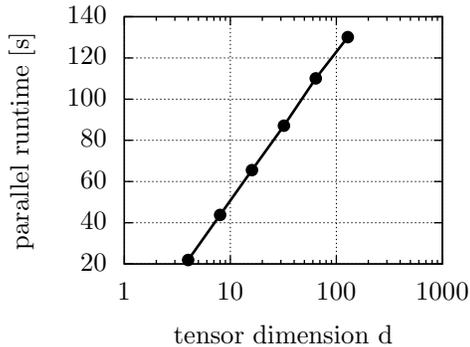}
\caption{Computation of the inner product.}
\label{figure:plot-dot}
\end{subfigure}\\
\begin{subfigure}[b]{0.48\textwidth}
\input{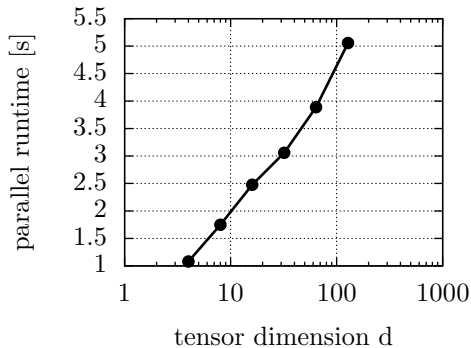}
\caption{Orthogonalization.}
\label{figure:plot-orth}
\end{subfigure}\quad
\begin{subfigure}[b]{0.48\textwidth}
\input{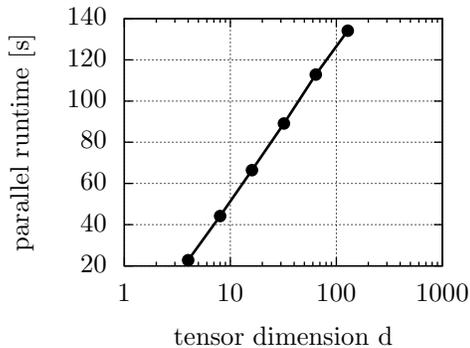}
\caption{Truncation of the orthogonalized $\mathcal{H}$-tensor.}
\label{figure:plot-truncation}
\end{subfigure}
\caption{Parallel runtimes for random $\mathcal{H}$-tensors. 
All tensor dimensions are of the same size $n = 10\,000$ and the tensor rank
on each node (except the root node) is $k = 100$. The parallel runtime of the algorithms for the above operations scales like
$\log(d)$.}
\label{figure:runtime1}
\end{figure}

From Fig.~\ref{figure:runtime2} we see that the parallel runtime 
for the application of an 
$\mathcal{H}$-operator to an $\mathcal{H}$-tensor is indeed rather independent of the tensor dimension $d$.
\begin{figure}
\input{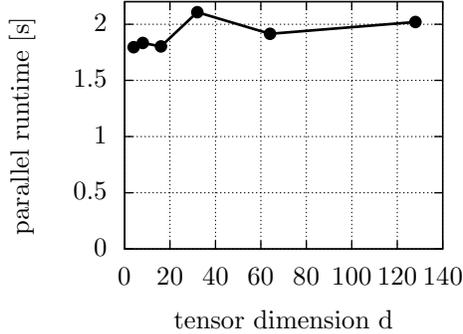}
\caption{The parallel runtime for the application of an $\mathcal{H}$-operator to an $\mathcal{H}$-tensor seems to be independent of the tensor dimension $d$ ($n = 400$, $k=20$).}
\label{figure:runtime2}
\end{figure}

\subsection{Parallel runtime for varying tensor rank $k$}
For varying tensor rank $k$ most of our algorithms reach the predicted complexity estimates 
(see Fig.~\ref{figure:runtime3}).
Only the orthogonalization of an $\mathcal{H}$-tensor behaves more like $\mathcal{O}(k^3)$ 
rather than the expected 
$\mathcal{O}(k^4)$. This might be due to optimizations in the LAPACK routines which we are using.

The ranks for the $\mathcal{H}$-operator are always chosen equal to those of the $\mathcal{H}$-tensor, which the operator is
applied to.
\begin{figure}
\begin{subfigure}[b]{0.48\textwidth}
\input{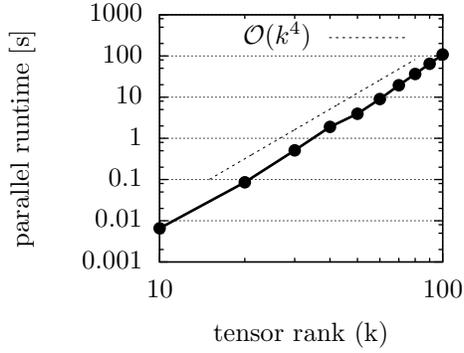}
\caption{Dot product for $n=10\,000$.} 
\label{figure:plot-dot-ranks}
\end{subfigure}\quad
\begin{subfigure}[b]{0.48\textwidth}
\input{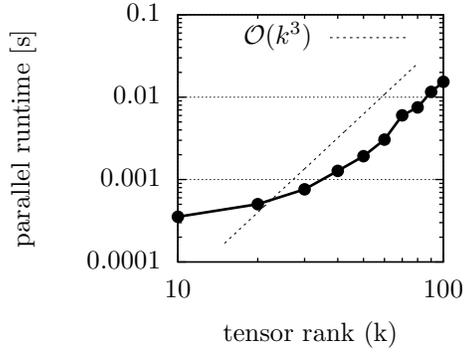}
\caption{Evaluation for $n = 10\,000$.} 
\label{figure:plot-evaluation-ranks}
\end{subfigure}\\
\begin{subfigure}[b]{0.48\textwidth}
\input{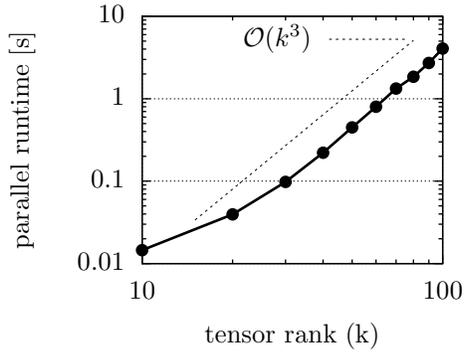}
\caption{Orthogonalization for $n=10\,000$.} 
\label{figure:plot-orth-ranks}
\end{subfigure}\quad
\begin{subfigure}[b]{0.48\textwidth}
\input{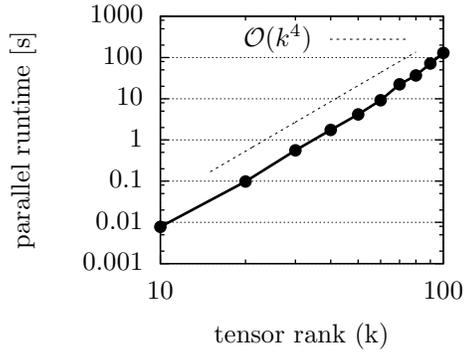}
\caption{Truncation for $n=10\,000$.} 
\label{figure:plot-truncation-ranks}
\end{subfigure}\\
\begin{subfigure}[b]{0.48\textwidth}
\input{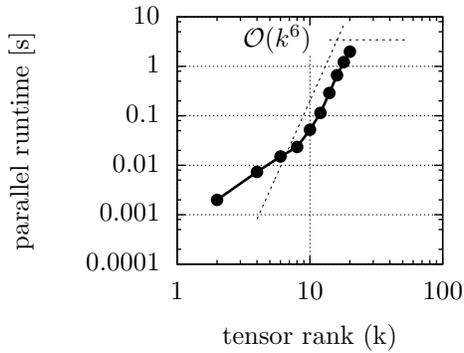}
\caption{Operator for $n=400$.} 
\label{figure:plot-operator-ranks}
\end{subfigure}
\caption{Parallel runtimes for tensor dimension $d = 64$, fixed sizes $n$ of the tensor dimensions and varying ranks $k$.}
\label{figure:runtime3}
\end{figure}

\section{Application to iterative methods}\label{Sec:multigrid}
Consider an $\mathcal{H}$-operator $\mathcal{A}\in\mathbb{R}^{\mathcal{I}\times\mathcal{I}}$
together with a suitable $\mathcal{H}$-tensor $B\in\mathbb{R}^{\mathcal{I}}$ as right-hand
side, where $\mathcal{I} = \mathcal{I}_1\times\cdots\times\mathcal{I}_d$ is a product index set
of dimension $d\in\mathbb{N}$. Then we can use the parallel algorithms for $\mathcal{H}$-tensor
arithmetic (cf. Section~\ref{section:arithmetic}) to solve $\mathcal{A}X = B$, 
$X\in\mathbb{R}^{\mathcal{I}}$, directly in the Hierarchical Tucker format.

In this section we apply a multigrid method to a diffusion equation
(\textit{"cookie problem"}, see Section~\ref{subsection:cookie}), which depends on 9 parameters.
A Finite Element discretizations leads to a parameter-dependent linear system which in our case
can directly
be transferred to a linear equation in the Hierarchical Tucker format.
All arithmetic operations are therefore carried out between tensors in 
the Hierarchical Tucker format.
As a smoother for the multigrid method we choose a Richardson method since it can easily be 
transferred to the Hierarchical Tucker format. On the coarsest grid level we use a CG algorithm
to solve the defect equation.
Each parameter will correspond to one tensor dimension. 
Together with the spacial component of the problem we
will thus end up with tensors of dimension $d=10$.

\subsection{The cookie problem}\label{subsection:cookie}
Inspired by \cite{KreTob11} we take the \textit{cookie problem}
\begin{equation}\label{equation:cookieProblem}
\left\{
\begin{array}{rrrr}
-\nabla (\sigma(x)\nabla u) & = & 1 & \mbox{in } \Omega \\
u & = & 0 & \mbox{on } \partial\Omega
\end{array}
\right.
\end{equation}
as a test problem, where our "cookies" are squares (instead of circles):

In the domain $\Omega = [0,7]\times [0,7]$ we define 9 rectangles ("cookies")
\begin{equation*}
\mathcal{D}_{\mu} = \{x\in\Omega\; :\; \|x - c_{\mu}\|_{\infty} < 0.5 \}\; ,\quad\mu = 1,\ldots ,9,
\end{equation*}
with  midpoints 
\begin{align*}
c_1 = (1.5,\,1.5)\;,\quad & c_2 = (3.5,\,1.5)\; ,\quad c_3 = (5.5,\,1.5)\; , \\
c_4 = (1.5,\,3.5)\;,\quad & c_5 = (3.5,\,3.5)\; ,\quad c_6 = (5.5,\,3.5)\; ,\\
c_7 = (1.5,\,5.5)\;,\quad & c_8 = (3.5,\,5.5)\; ,\quad c_9 = (5.5,\,5.5).
\end{align*}
The diffusion coefficient $\sigma(x)$, $x\in\Omega$, is defined as
\begin{equation}\label{equation:cookieDiffusion}
\sigma(x) = \left\{
\begin{array}{cll}
1+\alpha_{\mu} & \mbox{if} & x\in\mathcal{D}_{\mu}\; ,\quad \mu = 1,\ldots ,9, \\
 1 & \mbox{if} & x\notin\bigcup_{\mu = 1}^{9}\mathcal{D}_{\mu},
\end{array}
\right.
\end{equation}
i.e. depending on $9$ parameters $a_{\mu}$, $\mu = 1,\ldots ,9$ (cf. Fig.~\ref{figure:coarsegrid}).
\begin{figure}
\begin{center}
\includegraphics[scale=0.25]{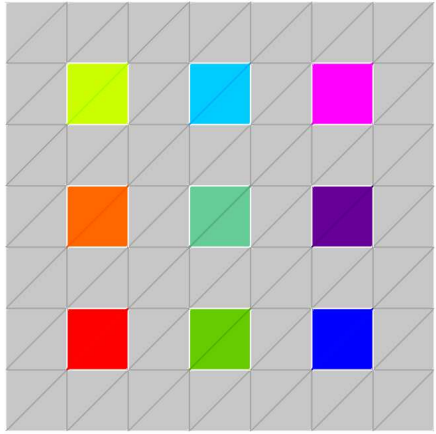}
\end{center}
\caption{Domain $\Omega = [0,7]\times [0,7]$ of the cookie problem. The cookies $\mathcal{D}_\mu$, $\mu = 1,\ldots ,9$,
 are
squares of side length $1$.
Outside the cookies (grey area) the diffusion coefficient equals one: $\sigma(x)\equiv 1$. 
On each cookie the diffusion coefficient is constant and controlled by a parameter: $\sigma(x) \equiv 1 + \alpha_\mu$, if
$x\in\mathcal{D}_\mu$, $\mu = 1,\ldots ,9$. \\Grid and image generated with ProMesh, \texttt{www.promesh3d.com}.}
\label{figure:coarsegrid}
\end{figure}
A Finite Element discretization of \eqref{equation:cookieProblem} with \eqref{equation:cookieDiffusion} 
leads to a parameter-dependent linear system
\begin{equation}\label{equation:cookieDiscrete}
A(\alpha_1,\alpha_2,\ldots ,\alpha_9) \cdot x(\alpha_1,\alpha_2,\ldots ,\alpha_9) = b,
\end{equation}
where the right-hand side $b$
is the same for each parameter combination $(\alpha_1,\alpha_2,\ldots ,\alpha_9)$.

Due to the affine structure of the operator in \eqref{equation:cookieProblem}
with \eqref{equation:cookieDiffusion}, the matrix in
\eqref{equation:cookieDiscrete} can directly be represented as an $\mathcal{H}$-operator:
Let $A_0\in\mathbb{R}^{n\times n}$ be the stiffness matrix of a Finite Element discretization of
\eqref{equation:cookieProblem} with $\sigma\equiv 1$ and let $A_\mu\in\mathbb{R}^{n\times n}$, 
$\mu =1,\ldots ,9$,
be the stiffness matrices which would result from a Finite Element discretization of
\eqref{equation:cookieProblem} on the same Finite Element space but with
$\sigma\equiv \mathbf{1}_{\mathcal{D}_\mu}$ instead of $\sigma\equiv 1$, where
$\mathbf{1}_{\mathcal{D}_\mu}\,:\, \Omega\to\{0,1\}$ is the indicator function of $\mathcal{D}_\mu$.
Since the diffusion coefficient \eqref{equation:cookieDiffusion} can be decomposed as
\begin{equation*}
\sigma \equiv 1 + \sum_{\mu = 1}^{9} \alpha_{\mu}\mathbf{1}_{\mathcal{D}_\mu},
\end{equation*}
the matrix $A(\alpha_1,\alpha_2,\ldots ,\alpha_9)$ can be decomposed as
\begin{equation}\label{equation:operatorStructure}
A(\alpha_1,\alpha_2,\ldots ,\alpha_9, i) = A_0(i) + \alpha_1 A_1(i) + \alpha_2 A_2(i) + \cdots + 
\alpha_9 A_9(i),\quad i = 1,\ldots ,n^2,
\end{equation}
where we combined the two matrix indices to one index $i$ 
($\mathbb{R}^{n\times n}\cong\mathbb{R}^{n^2}$). Assuming discretizations
\begin{equation*}
a_{\mu,i_{\mu}},\quad i_\mu = 1,\ldots ,n_\mu,\quad \mu = 1,\ldots ,d,
\end{equation*}
for the parameters, we can represent \eqref{equation:operatorStructure} in the Hierarchical Tucker
format, where we take the underlying tree $T_{10}$ as the balanced tree 
described in
Secion~\ref{section:distributedtensors}.
We choose the first $9$ leaf frames $U_{\{\mu\}}\in\mathbb{R}^{n_{\mu}\times 10}$,
$\mu = 1,\ldots ,9$, which correspond to the $9$ parameter directions, as
\begin{equation}\label{equation:parameterLeafFrames}
(U_{\{\mu\}})_{i_\mu,j} = 
\left\{
\begin{array}{ccl}
1 & \mbox{if} & j\neq \mu + 1\\
\alpha_{\mu,i_\mu} & \mbox{if} & j = \mu + 1
\end{array}
\right.\, ,\; i_\mu = 1,\ldots n_{\mu}\,,\; j = 1,\ldots ,10\,,\; \mu = 1,\ldots ,9.
\end{equation}
The last leaf frame $U_{\{10\}}\in\mathbb{R}^{n^2\times 10}$ 
just consists of the matrices $A_0,\ldots ,A_9$ 
(written as columns):
\begin{equation*}
(U_{10})_{i_{10},j} = A_{j-1}(i_{10})\; ,\quad i_{10} = 1,\ldots ,n^2\; ,\quad
j = 1,\ldots ,10.
\end{equation*}
The transfer tensors are diagonal tensors with ones on the diagonal.

One easily verifies that the above construction yields a representation of the
parameter dependent matrix $A$ as $\mathcal{H}$-operator. 
This representation is, however, not minimal: Each of the leaf frames in 
\eqref{equation:parameterLeafFrames} contains nine times the column $(1,\ldots ,1)^{\top}$. 
By only keeping this column once per leaf frame (and adjusting the transfer tensors accordingly),
we can always achieve Hierarchical ranks of $2$ for these leaf frames.

We thus get an equation $\mathcal{A}X = B$ in the Hierarchical Tucker format, where the $\mathcal{H}$-operator has
Hierarchical ranks $k_t = \rank(\mathcal{M}_t(\mathcal{A}))$, 
$t\in T_d$, of at most $d = 10$ and the right-hand side
$B$ is of rank $1$, since it only depends on the spacial variable and not on the parameters. The solution tensor $X$ then contains the solutions of \eqref{equation:cookieDiscrete} for all possible 
parameter combinations $(\alpha_1,\alpha_2,\ldots ,\alpha_9)$ and allows for further arithmetic operations in
the Hierarchical Tensor format, as e.g. computing averages over spacial domains or over parameters,
or computing the expected value or the variance with respect to a probability distribution of the parameters,
where the probability distribution has to be represented/approximated in the Hierarchical Tucker format
(which is very easy for the case of independent parameters).

\subsection{The CG method in parallel $\mathcal{H}$-tensor arithmetic}\label{subsection:CG}
We use a CG algorithm (Algorithm~\ref{algorithm:CG}) to solve the equation $\mathcal{A}X = B$ of 
the cookie problem 
(cf. Section~\ref{subsection:cookie}) 
on the coarsest grid, which is shown in Fig.~\ref{figure:coarsegrid}.
We let each parameter $a_{\mu}$, $\mu = 1,\ldots ,9$, vary between $0.5$ and $1.5$, using 10 equidistant
grid points in $[0.5,1.5]$. The full tensor $X$ on the coarsest grid would therefore consist of
\begin{equation*}
\#(\mbox{inner grid points}) \cdot 10^9 = 36\cdot 10^9\; \mbox{entries},
\end{equation*}
whereas in the Hierarchical Tucker format, the number of entries can be bounded by
\begin{align*}
& (\# \mbox{inner grid points} + 9\cdot 10)\cdot k_{\max} + \#(\mbox{inner tree nodes})\cdot k_{\max}^3 + k_{\max}^2 \\
= & (36+9\cdot 10)\cdot k_{\max} + 8\cdot k_{\max}^3 + k_{\max}^2 = 126\cdot k_{\max} + 8\cdot k_{\max}^3 + k_{\max}^2\;\mbox{entries},
\end{align*}
where $k_{\max}$ is an upper bound for the Hierarchical ranks.
For $k_{\max}=50$ these would be about $10^6$ entries.
\begin{algorithm}
\KwData{operator $\mathcal{A}$, right-hand side $B$ in the Hierarchical Tucker format}
\KwResult{Approximate solution $X$ in the Hierarchical Tucker format}
$X_0 = B$\;
$R_0 = B - \mathcal{A}X_0$\;
$R_0 = \mathcal{T}(R_0)$\;
$D_0 = R_0$\;
$j = 0$\;
\While{ $\sqrt{\langle R_j, R_j\rangle } >$ EPS {\bf and} $j < $ MAX\_CG\_STEPS }
{
    $Z = \mathcal{A}D_j$\;
    $Z = \mathcal{T}(Z)$\;
    $\alpha_j = \langle R_j,R_j\rangle / \langle D_j, Z\rangle$\;
    $X_{j+1} = X_{j} + \alpha_{j} D_j$\;
    $X_{j+1} = \mathcal{T}(X_{j+1})$\;
    $R_{j+1} = R_{j} - \alpha_{j} Z$\;
    $R_{j+1} = \mathcal{T}(R_{j+1})$\;
    $\beta_{j} = \langle R_{j+1}, R_{j+1}\rangle / \langle R_{j}, R_{j}\rangle$\;
    $D_{j+1} = \beta_{j}D_{j} + R_{j+1}$\;
    $D_{j+1} = \mathcal{T}(D_{j+1})$\;
    $j = j+1$\;
}
$X = X_j$\;
\caption{CG algorithm to solve $\mathcal{A}X = B$ in the Hierarchical Tucker format including truncation.
The truncation $\mathcal{T}$ can be carried out either down to fixed rank $k$ ($\mathcal{T} = \mathcal{T}_k$) 
or with prescribed accuracy $\varepsilon$ ($\mathcal{T}= \mathcal{T}_{\varepsilon}$).
The stopping criterion depends on a lower bound EPS for the norm of the residual and on an upper bound 
MAX\_CG\_STEPS for the number of CG steps.}

\label{algorithm:CG}
\end{algorithm}

In the CG algorithm (Algorithm~\ref{algorithm:CG}) we use our parallel algorithms for arithmetic operations between $\mathcal{H}$-tensors (cf. Section~\ref{section:arithmetic}). 
After each addition of two $\mathcal{H}$-tensors and after each 
application of the $\mathcal{H}$-operator $\mathcal{A}$ to an $\mathcal{H}$-tensor we truncate the result
 down to lower rank, where we prescribe some accuracy $\varepsilon$
such that the truncation error is smaller than $\varepsilon$. 

The following Lemma~\ref{lemma:perturbedCG} considers the simplified case, where after each 
CG step in
Algorithm~\ref{algorithm:CG} ($j=0,1,2,\ldots$) the iterate $X_j$ is truncated, i.e. 
truncations in between are neglected. For this simplified case we can reach any accuracy 
$\delta >0$ of the approximate solution (i.e. $\|X - X_j\|<\delta$ for some $j$) 
if we choose the upper bound $\varepsilon > 0$ for the 
truncation error small enough.
Similar results have already been published in \cite{Matthies2012} for the general case of
convergent iterative processes in Banach spaces.
\begin{lemma}[Convergence of the CG algorithm including truncation]\label{lemma:perturbedCG}
Consider the linear equation $\mathcal{A}X=B$ with symmetric and positive-definite $\mathcal{A}$.
Let $\mathcal{C}$ denote the operation of performing one exact CG step according to the linear
system $\mathcal{A}X =B$, i.e. when $X_j$ is the result of $j$ exact CG steps, 
$X_{j+1} = \mathcal{C}X_j$
would be the next iterate in the exact CG method. 
Furthermore let $\mathcal{T}_{\varepsilon}$ be the 
truncation down to lower ranks with error smaller than $\varepsilon$.

For arbitrary $\delta > 0$ and arbitrary starting tensor $X_0$ we can achieve
\begin{equation*}
\|X - (\mathcal{T}_{\varepsilon}\mathcal{C})^j X_0\|_{\mathcal{A}} < \delta
\end{equation*}
for some $j\in\mathbb{N}$, if we choose
\begin{equation*}
\varepsilon < \delta / C
\end{equation*}
with $C:= \sqrt{\|\mathcal{A}\|} (\cond(\mathcal{A}) + 1)$, where 
$\|\mathcal{A}\| = \lambda_{\max}$ is the spectral norm (largest eigenvalue) of $\mathcal{A}$, 
$\cond(\mathcal{A}) = \lambda_{\max}/\lambda_{\min}$ is the condition number 
(ratio between largest and smallest eigenvalue) of $\mathcal{A}$ and
$\|\cdot \|_{\mathcal{A}}$ is the $\mathcal{A}$-norm, defined as 
$\|X\|_{\mathcal{A}} :=\sqrt{\langle X,\mathcal{A}X\rangle }$.
\end{lemma}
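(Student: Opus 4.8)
The plan is to reduce the statement to a standard contraction-plus-perturbation recurrence for the error measured in the $\mathcal{A}$-norm. Write $Y_j := (\mathcal{T}_{\varepsilon}\mathcal{C})^j X_0$ for the perturbed iterates and $e_j := \|X - Y_j\|_{\mathcal{A}}$. The two ingredients I would isolate are: (i) one exact CG step contracts the error in the $\mathcal{A}$-norm, and (ii) one truncation perturbs the iterate by a controlled amount, which I can bound in the $\mathcal{A}$-norm as well.

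For (i) I would invoke the optimality of the CG iterate: the exact step $\mathcal{C}$ produces, among all points of the current affine Krylov space, the one minimizing the $\mathcal{A}$-norm of the error; since a steepest-descent step from $Y_j$ lies in that space, $\mathcal{C}Y_j$ is at least as good, and the Kantorovich inequality yields
\[
\|X - \mathcal{C}Y\|_{\mathcal{A}} \le \frac{\cond(\mathcal{A}) - 1}{\cond(\mathcal{A}) + 1}\,\|X - Y\|_{\mathcal{A}} =: \rho\,\|X - Y\|_{\mathcal{A}}, \qquad \rho < 1 .
\]
For (ii), since $\mathcal{T}_{\varepsilon}$ changes a tensor by at most $\varepsilon$ in the Euclidean norm, writing $\mathcal{T}_{\varepsilon}\mathcal{C}Y_j = \mathcal{C}Y_j + E_j$ with $\|E_j\|_2 \le \varepsilon$ and using $\|E\|_{\mathcal{A}} = \sqrt{\langle E, \mathcal{A}E\rangle} \le \sqrt{\lambda_{\max}}\,\|E\|_2 = \sqrt{\|\mathcal{A}\|}\,\|E\|_2$ gives $\|E_j\|_{\mathcal{A}} \le \sqrt{\|\mathcal{A}\|}\,\varepsilon$.

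Combining (i), (ii) and the triangle inequality in the $\mathcal{A}$-norm yields the scalar recurrence $e_{j+1} \le \rho\,e_j + \sqrt{\|\mathcal{A}\|}\,\varepsilon$. Unrolling it and summing the geometric series gives
\[
e_j \le \rho^{\,j} e_0 + \frac{\sqrt{\|\mathcal{A}\|}\,\varepsilon}{1-\rho} .
\]
Here I would plug in $1 - \rho = 2/(\cond(\mathcal{A})+1)$, so that the limiting term equals $\tfrac12\,\sqrt{\|\mathcal{A}\|}\,(\cond(\mathcal{A})+1)\,\varepsilon = \tfrac12\,C\varepsilon$. The hypothesis $\varepsilon < \delta/C$ then bounds the limiting term by $\delta/2$, and choosing $j$ large enough that the transient $\rho^{\,j} e_0 < \delta/2$ (possible since $\rho < 1$) gives $e_j < \delta$, as claimed.

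The main obstacle I anticipate is pinning down ingredient (i) rigorously: the operator $\mathcal{C}$ is a CG step, which in principle carries the accumulated search-direction information, whereas the truncation destroys that Krylov structure. I would address this by interpreting $\mathcal{T}_{\varepsilon}\mathcal{C}$ as a restart after each truncation, so that the relevant exact step is effectively steepest descent, for which the per-step contraction factor $(\cond(\mathcal{A})-1)/(\cond(\mathcal{A})+1)$ is exactly the classical Kantorovich bound; the optimality remark above shows that a genuine CG step is no worse. A secondary point to state carefully is that the truncation error is controlled in the Euclidean norm (this is precisely what the hierarchical SVD of Section~\ref{subsection:truncation} delivers), which is why the factor $\sqrt{\|\mathcal{A}\|}$ enters the constant $C$.
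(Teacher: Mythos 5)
Your proposal is correct and follows essentially the same route as the paper's proof: the same triangle-inequality splitting of one perturbed step into an exact CG contraction with factor $(\cond(\mathcal{A})-1)/(\cond(\mathcal{A})+1)$ plus a truncation error bounded by $\sqrt{\lambda_{\max}}\,\varepsilon$ in the $\mathcal{A}$-norm, the same geometric-series unrolling, and the same $\delta/2 + \delta/2$ split at the end. Your discussion of why the single-step (steepest-descent/Kantorovich) contraction factor is the right one to use here matches the remark the authors make after the lemma.
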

\begin{proof}
For the exact CG algorithm one can show (cf. \cite{Meurant06})
\begin{equation}\label{equation:exactCG}
\|X-\mathcal{C}^j X_0\|_{\mathcal{A}} \leq \gamma \|X - \mathcal{C}^{j-1}X_0\|_{\mathcal{A}}
\end{equation}
with $\gamma = (\cond(\mathcal{A}) -1)/(\cond(\mathcal{A}) + 1)$.

We now split the error of one perturbed CG step into
the error of one exact CG step and one truncation error:
\begin{equation*}
\|X-(\mathcal{T}_{\varepsilon}\mathcal{C}) X_0\|_{\mathcal{A}}
\leq \| X - \mathcal{C} X_0\|_{\mathcal{A}} + \| \mathcal{C} X_0 - \mathcal{T}_{\varepsilon}\mathcal{C}X_0\|_{\mathcal{A}}.
\end{equation*}
The first error can be estimated by \eqref{equation:exactCG}. 
The second error is first transformed to the standard norm
$\|\cdot \|$ (Euclidian norm/Frobenius norm), which we use to measure the truncation error, since it can then
be bounded by $\varepsilon$:
\begin{align*}
\|X-(\mathcal{T}_{\varepsilon}\mathcal{C}) X_0\|_{\mathcal{A}} &
\leq \gamma\|X-X_0\|_{\mathcal{A}} + \sqrt{\lambda_{\max}}\|\mathcal{C}X_0 - \mathcal{T}_{\varepsilon}\mathcal{C}X_0\| \\
 & \leq \gamma\| X-X_0\|_{\mathcal{A}} + \varepsilon\sqrt{\lambda_{\max}} .
\end{align*}
Similarly (by induction) we can get a bound for the error of $j$ perturbed 
CG steps:
\begin{align*}
\|X - (\mathcal{T}_{\varepsilon}\mathcal{C})^j X_0\|_{\mathcal{A}} &
\leq \gamma \| X-(\mathcal{T}_{\varepsilon}\mathcal{C})^{j-1} X_0\|_{\mathcal{A}} 
+ \varepsilon\sqrt{\lambda_{\max}} \\
 & \leq \gamma( \gamma\|X-(\mathcal{T}_{\varepsilon}\mathcal{C})^{j-2} X_0\|_{\mathcal{A}} + \varepsilon\sqrt{\lambda_{\max}} ) + \varepsilon\sqrt{\lambda_{\max}} \\
& = \gamma ^2\|X - (\mathcal{T}_{\varepsilon}\mathcal{C})^{j-2}X_0\|_{\mathcal{A}} + \varepsilon\sqrt{\lambda_{\max}}(\gamma + 1 ) \\
& \leq \cdots\leq \gamma^{j}\|X - X_0\|_{\mathcal{A}} + \varepsilon\sqrt{\lambda_{\max}}\left(\sum_{\ell=0}^{j-1}\gamma^\ell\right).
\end{align*}
The sum can be bounded by $(1-\gamma)^{-1}$, thus we have
\begin{equation*}
\|X - (\mathcal{T}_{\varepsilon}\mathcal{C})^j X_0\|_{\mathcal{A}} \leq
\gamma^j \|X - X_0\|_{\mathcal{A}} + \varepsilon\sqrt{\lambda_{\max}} (1-\gamma)^{-1}.
\end{equation*}
Since $\gamma^j \to 0$ for $j\to\infty$, we can find $j\in\mathbb{N}$ such that $\gamma^j\|X-X_0\|_{\mathcal{A}} < \delta / 2$. To achieve $\|X-(\mathcal{T}_{\varepsilon}\mathcal{C})^j X_0\|_{\mathcal{A}} <\delta$ we thus
have to choose $\varepsilon$ in such a way that
\begin{equation*}
\varepsilon\sqrt{\lambda_{\max}}(1-\gamma)^{-1} < \frac{\delta}{2}\quad\Leftrightarrow\quad
\varepsilon < \frac{\delta}{C}\quad\mbox{with}\quad C = \sqrt{\lambda_{\max}}(\cond(\mathcal{A})+1),
\end{equation*}
which completes the proof.
\end{proof}
\begin{remark}
\begin{enumerate}
\item Lemma~\ref{lemma:perturbedCG} only states the convergence of the perturbed CG algorithm, 
i.e. that in
principal we can reach arbitrary accuracy (absolute error of the computed solution) if the absolute 
truncation error is bounded by a sufficiently small $\varepsilon$. Lemma~\ref{lemma:perturbedCG} does not
reveal the convergence rate of such an algorithm.
The estimate 
\begin{equation}\label{equation:convergenceRate1}
\|X-\mathcal{C}^j X_0\|_{\mathcal{A}}\leq \left(\frac{\cond(A)-1}{\cond(A)+1}\right)^j
\|X-X_0\|_{\mathcal{A}},
\end{equation}
which we used in the proof,
is quite pessimistic. This bound can already be achieved for the method of steepest descent, where the residual $R_{j+1}$ of one step is typically only orthogonal to the search direction $D_j$ of the last step 
(cf. Algorithm~\ref{algorithm:CG} for notation). In the CG method, however, this residual is orthogonal to the search directions of all previous steps, which yields the (actually much better) estimate
\begin{equation}\label{equation:convergenceRate2}
\|X-\mathcal{C}^j X_0\|_{\mathcal{A}}\leq 2\left(\frac{\sqrt{\cond(A)}-1}{\sqrt{\cond(A)}+1}\right)^j
\|X-X_0\|_{\mathcal{A}}.
\end{equation}
In the proof of Lemma~\ref{lemma:perturbedCG} we used \eqref{equation:convergenceRate1} 
instead of \eqref{equation:convergenceRate2}, since
we were interested in the decay of the error for one single CG step (i.e. $j=1$), 
where \eqref{equation:convergenceRate2} can not be used for.

Moreover, in practice, the Euclidian norm of the error instead of the $\mathcal{A}$-norm will be of interest, which would
give us an additional factor $1/\sqrt{\lambda_{\min}} = \sqrt{\|\mathcal{A}^{-1}\|}$ for the constant $C$.

Much more precise analysis of perturbed CG methods can be found in \cite{Meurant06} or \cite{Greenbaum89}, where the perturbed
CG algorithm is reinterpreted as an exact CG algorithm for a perturbed operator.
\item Lemma~\ref{lemma:perturbedCG} assumes only one truncation of the iterate $X_j$ in each CG step.
In practice we truncate after each addition of two $\mathcal{H}$-tensors as well as after each application of
$\mathcal{A}$ to an $\mathcal{H}$-tensor (cf. Algorithm~\ref{algorithm:CG}). This includes also truncations 
of the residuals $R_j$ and the search directions $D_j$, which themselves 
are afterwards
involved in a quotient of scalar products. A precise analysis would therefore have to deal with the respective propagation of
the errors inside the algorithm, cf. \cite{Greenbaum89}, \cite{Meurant06}.
\item For our example ($\mathcal{H}$-operator $\mathcal{A}$ for the coarsest grid of the cookie problem 
with $10$ equidistant
points in $[0.5,1.5]$ per parameter) we can estimate $\lambda_{\max}\approx 11$ and $\lambda_{\min}\approx 0.4$,
which would yield $C \approx 95$ in Lemma~\ref{lemma:perturbedCG}, and $C \approx 150$ 
if we measure the error in the Euclidian norm instead of the $\mathcal{A}$-norm.
\end{enumerate}
\end{remark}
In our numerical experiments we used $\varepsilon = 10^{-4}$ as upper bound for the absolute error of each
truncation. Since the Hierarchical ranks can temporarily get rather large during the CG
algorithm (cf. \cite{BaGr13}), we prescribed 
additional bounds for the maximal rank,
by which we implicitly choose a larger $\varepsilon$. 
In Fig.~\ref{figure:cg-tests} we plotted the relative residual norm $\|\mathcal{A}X_j -B\| / \|B\|$
within 25 CG steps with different bounds for
the maximal rank. One observes the principle of Lemma~\ref{lemma:perturbedCG}: the higher the rank bound 
(i.e. the smaller we choose $\varepsilon$), the smaller the relative resiudal norm, which can be reached in the
CG algorithm.

The plotted residual norms are not the norms of the "residuals" $R_j$ computed 
in the perturbed CG algorithm, since these can deviate
from the exact residual $AX_j - B$ (cf. \cite{Greenbaum89}). 
We therefore computed the residual norm after each CG step, i.e.
computed $\sqrt{\langle \mathcal{A}X_j - B, \mathcal{A}X_j -B\rangle}$ in $\mathcal{H}$-tensor arithmetic.
\begin{figure}
\begin{subfigure}[b]{0.48\textwidth}
\input{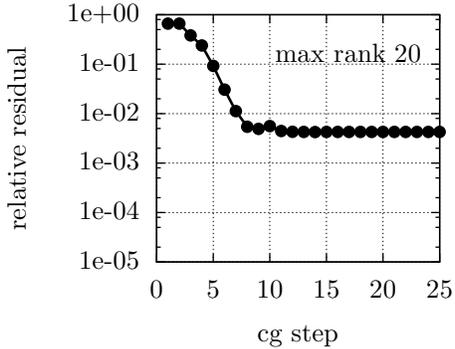}
\caption{Maximal rank $k=20$.} 
\label{figure:cg-tests-rank20}
\end{subfigure}\quad
\begin{subfigure}[b]{0.48\textwidth}
\input{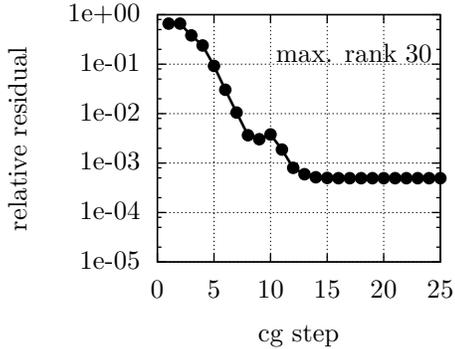}
\caption{Maximal rank $k=30$.}
\label{figure:cg-tests-rank30}
\end{subfigure}\\
\begin{subfigure}[b]{0.48\textwidth}
\input{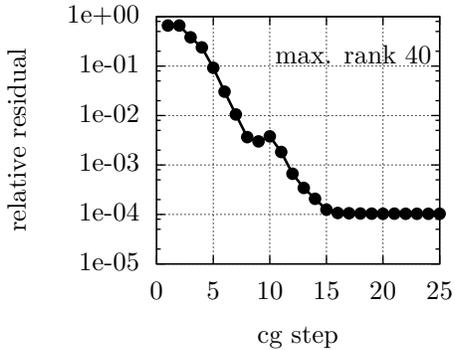}
\caption{Maximal rank $k=40$.}
\label{figure:cg-tests-rank40}
\end{subfigure}\quad
\begin{subfigure}[b]{0.48\textwidth}
\input{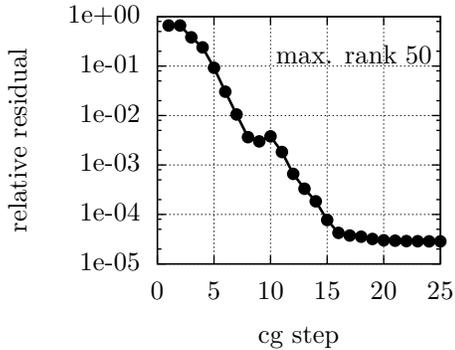}
\caption{Maximal rank $k=50$.}
\label{figure:cg-tests-rank50}
\end{subfigure}
\caption{CG algorithm using $\mathcal{H}$-arithmetic for the cookie problem on the coarsest grid, which is
depicted in Fig.~\ref{figure:coarsegrid}. For each of the nine parameters we use 10 equidistant grid points in 
$[\frac{1}{2},\frac{3}{2}]$. After each addition of $\mathcal{H}$-tensors and after each application of the
$\mathcal{H}$-operator $\mathcal{A}$ to an $\mathcal{H}$-tensor we truncate the resulting tensor down to
lower ranks with an absolute error less than $10^{-4}$. In addition we prescribe a maximal rank $k$, since the 
involved computations have complexity up to $\mathcal{O}(k^4)$.}
\label{figure:cg-tests}
\end{figure}

When applying our CG algorithms with fixed rank bounds on finer grids, 
we need more iterations until we achieve the maximal atainable accuracy with respect to
the rank bound, as one would
expect it due to the growing condition number of the operator $\mathcal{A}$.
In order to overcome this, one could use preconditioning.
We instead apply a multigrid algorithm with respect to the spacial component of the
problem (semi-coarsening) to solve $\mathcal{A}X=B$ on finer grids, which is described
in Section~\ref{subsection:multigridMethod}. 

Multigrid methods in the Hierarchical Tucker format have already been studied in
\cite{BaGr13}, where the number of iterations needed to solve the Poisson equation up to some
relative residual turned out to be independent of the tensor dimension $d$ 
(i.e. the number of parameters). This would make them usable for problems depending on a huge
number of parameters.

\subsection{A multigrid method in parallel $\mathcal{H}$-tensor arithmetic}\label{subsection:multigridMethod}
Multigrid methods have already been applied for low-rank matrices and 
$\mathcal{H}$-matrices in
\cite{GrHa2007} and \cite{Gr2008} in order to solve large scale Sylvester and Riccati
equations. In \cite{BaGr13} multigrid methods have been applied in $\mathcal{H}$-tensor
arithmetic to solve equations on high dimensional domains.

In order to compute a solution of higher resolution, we refine the coarse grid of 
Fig.~\ref{figure:coarsegrid} three
times. With every refinement step we add the midpoints of all edges as new grid points. By this we get the grid
hierarchy of Fig.~\ref{figure:gridHierarchy}, where the finest grid consist of $3025$ inner grid points.
\begin{figure}
\begin{center}
\includegraphics[scale=0.35]{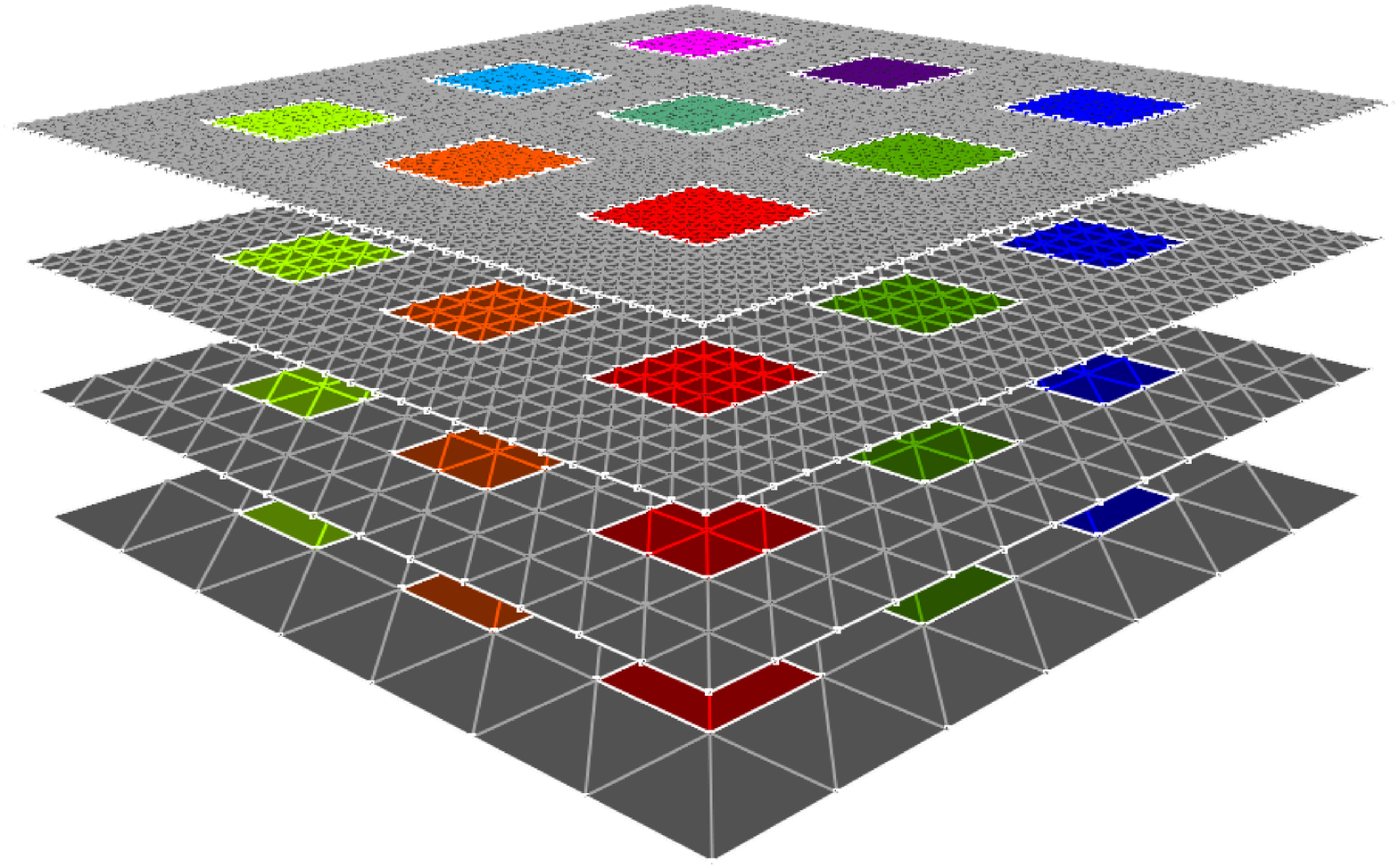}
\end{center}
\caption{Grid hierarchy for the multigrid method with $36$ inner grid points on the coarsest grid and $3025$
inner grid points on the finest grid.\\Grid and image generated with ProMesh, \texttt{www.promesh3d.com}.}
\label{figure:gridHierarchy}
\end{figure}

One multigrid step
(Algorithm~\ref{algorithm:multigrid}) starts on the finest grid and then recursively proceeds to the next 
coarser grid to solve the system $\mathcal{A}E = D$, where $D$ is the restriction of the residual 
$B - \mathcal{A}X$ to the next coarser grid. The solution $E$ is then prolongated back to the finer grid and
added to the current iterate to get the next iterate. For the multigrid 
method to work well we need 
to be able to compute good approximations of the error
on the next coarser grid. These can be obtained by several steps of a Richardson
method (Algorithm~\ref{algorithm:richardson}), which serves as a smoother for the error.
On the coarsest level we use the CG algorithm (cf. Section~\ref{subsection:CG}) to solve the system.
An introduction to multigrid algorithms together with convergence analysis can e.g. be found in \cite{Ha85}.
\begin{algorithm}[p]
\KwData{iterate $X_j$}
\KwResult{new iterate $X_{j+1}$}
$Z = \mathcal{A}X_j$\;
$Z = \mathcal{T}(Z)$\;
$Z = Z - B$\;
$Z = \mathcal{T}(Z)$\;
$X_{j+1} = X_j -\omega Z$\;
$X_{j+1} = \mathcal{T}(X_{j+1})$\;
\caption{$X_{j+1}=\;$\texttt{richardson\_step( $\mathcal{A}$, $X_{j}$, $B$ )} performs one step of the 
Richardson method
for $\mathcal{A}X = B$ in $\mathcal{H}$-tensor arithmetic, including
truncation. We use this method as a smoother for the multigrid method (Algorithm~\ref{algorithm:multigrid}). 
The truncation $\mathcal{T}$ can be carried out either down to fixed rank $k$ 
($\mathcal{T}=\mathcal{T}_k$) or with prescribed accuracy $\varepsilon$ 
($\mathcal{T} = \mathcal{T}_{\varepsilon}$). The optimal choice for $\omega$ is
$\omega_{opt}=2/(\lambda_{\min} + \lambda_{\max})$, where $\lambda_{\min}$ and $\lambda_{\max}$ are the minimal
and the maximal eigenvalues of the operator $\mathcal{A}$.}
\label{algorithm:richardson}
\end{algorithm}

\begin{algorithm}[p]
\KwData{iterate $X_j$}
\KwResult{new iterate $X_{j+1}$}
\eIf {on coarsest grid}
{
    Solve $\mathcal{A}X_{j+1}=B$ with CG (Algorithm~\ref{algorithm:CG})\;
}
{
    $5\times$ presmoothing: $X_j=\;$\texttt{richardson\_step( $\mathcal{A}$, $X_j$, $B$ )} 
    (Algorithm~\ref{algorithm:richardson})\;
    $R = B - \mathcal{A}X_j$\;
    $R = \mathcal{T}(R)$\;
    $D = \mathrm{restriction}(R)$\;
    $E = D$\;
    $E=\;$\texttt{multigrid\_step( $\mathcal{A}$, $E$, $D$ )}\;
    $P = \mathrm{prolongation}( E )$\;
    $X_{j+1} = X_j + P$\;
    $X_{j+1} = \mathcal{T}(X_{j+1})$\;
    $5\times$ postsmoothing: $X_{j+1}=\;$\texttt{richardson\_step( $\mathcal{A}$, $X_{j+1}$, $B$ )}
    (Algorithm~\ref{algorithm:richardson})\;
}
\caption{One multigrid step, $X_{j+1}=\;$\texttt{multigrid\_step( $\mathcal{A}$, $X_j$, $B$ )}, 
to solve ${\mathcal{A}X = B}$ in the
Hierarchical Tucker format, including truncation.
The truncation $\mathcal{T}$ can be carried out either down to fixed rank $k$ ($\mathcal{T}=\mathcal{T}_k$)
or with prescribed accuracy $\varepsilon$ ($\mathcal{T}=\mathcal{T}_{\varepsilon}$).}
\label{algorithm:multigrid}
\end{algorithm}

For linear methods (as the Richardson method and the multigrid method), one can obtain similar results to
Lemma~\ref{lemma:perturbedCG}: the best attainable accuracy of the solution depends on the accuracy of the 
truncations (see also \cite{Matthies2012}).

We performed $10$ multigrid steps for the equation $\mathcal{A}X=B$ on the finest grid
($3025$ inner grid points, cf. Fig.~\ref{figure:gridHierarchy}) with different upper bounds
for the Hierarchical ranks. In Fig.~\ref{figure:mg-tests} we show the relative residual norms
for upper rank bounds of $30$ and $50$. 
One observes the linear convergence of the multigrid method down to some lower threshold of the
relative residual norm.
This threshold decreases with increasing rank bounds, which is shown in Fig.~\ref{figure:mg-accuracy-rank}.
\begin{figure}
\begin{subfigure}[b]{0.48\textwidth}
\input{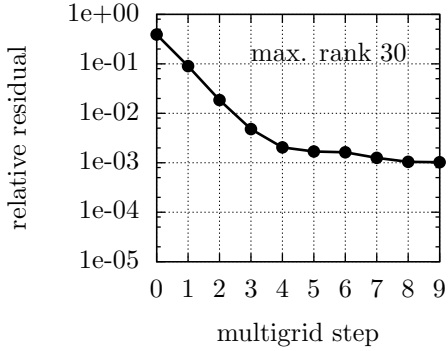}
\caption{Maximal rank $k=30$.} 
\label{figure:mg-tests-rank30}
\end{subfigure}\quad
\begin{subfigure}[b]{0.48\textwidth}
\input{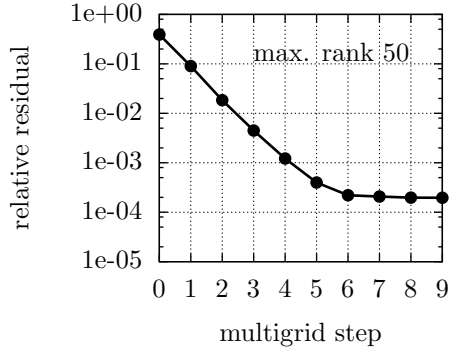}
\caption{Maximal rank $k=50$.}
\label{figure:mg-tests-rank50}
\end{subfigure}
\caption{Multigrid method using $\mathcal{H}$-arithmetic for the cookie problem on the grid hierarchy from
Fig.~\ref{figure:gridHierarchy}, once with maximal rank $k=30$ and once with maximal rank $k=50$. 
On the coarsest grid we apply the CG solver from Section~\ref{subsection:CG}.}
\label{figure:mg-tests}
\end{figure}
\begin{figure}
\begin{center}
\input{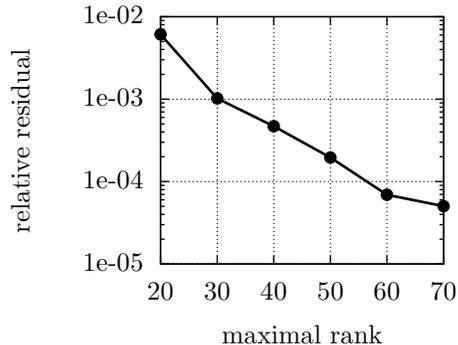}
\end{center}
\caption{Dependence of the relative residual norm on the upper rank bound 
in the multigrid method.
For each rank bound $20,30,\ldots ,70$ we plotted the best accuracy we obtained after 10
multigrid steps.}
\label{figure:mg-accuracy-rank}
\end{figure}

\section{Conclusions}\label{sec:conclusions}
In this article we presented parallel algorithms which we developed for basic arithmetic operations between distributed
tensors in the Hierarchical Tucker format. Throughout the article we assumed the tensor data to
be distributed according to the underlying tree of the Hierarchical
Tucker format: The data for each tree node is stored on its own compute node.
Since our algorithms typically need communication between different tree levels, we minimized the
number of tree levels by choosing a balanced tree. For a tensor of dimension $d$ the number of
tree levels grows like $\log(d)$, i.e. a level-wise parallelization of our 
algorithms yields a parallel runtime, which grows like $\log(d)$.
We validated the expected $\log(d)$-growth of the parallel runtime by several runtime tests in
Section~\ref{section:runtime}. 
The $\log(d)$-growth of the parallel runtime makes our algorithms applicable for high-dimensional
problems which will be part of future work.

Despite the $\log(d)$-growth of the parallel runtime, we still have a complexity of 
at least $\mathcal{O}(k^3)$ for each compute node 
(cf. Section~\ref{section:runtime}), where $k\in\mathbb{N}$ is the Hierarchical rank
for the respective node.
We would therefore expect a further reduction of the runtime by using shared
memory parallelization on each node (e.g. \texttt{OpenMP}).

In this article we used a parallelized version of the Hierarchical SVD from \cite{Gr10} to
truncate tensors in the Hierarchical Tucker format.
Since this algorithm involves the computation of $B^{\top}B$ and $BB^{\top}$ for 
matrices $B$, it may lead to a loss of accuracy. 
It will be interesting in future research to transfer the techniques from \cite{Etter16,StWhite13}
to the Hierarchical SVD.

One interesting application is parameter-dependent problems, which we illustrated 
by a model problem in 
Section~\ref{Sec:multigrid}. We applied a multigrid method including our parallel algorithms 
for $\mathcal{H}$-arithmetic to
a diffusion equation which depends on $9$ parameters. 
Our experiments indicate that the relative residual norm in the iterative method 
is bounded from below by
some barrier which depends on the accuracy of the $\mathcal{H}$-tensor truncations during the 
method.

For larger problems preconditioners should be applied,
when using the CG method (cf. \cite{KreTob11}), which we left out here.
Moreover it could be advantageous to apply multilevel sampling techniques, which were introduced
in \cite{Ba17}.

Our parallel algorithms can as well be used for the postprocessing of (distributed) tensors 
in the Hierarchical Tucker format, which might have been assembled by some other method 
(e.g. a sampling method).
This could be of interest in the field of uncertainty quantification and could be used for parameter fitting,
i.e. adapting model parameters based on some quantity of interest of the solution, which is either known or
can be estimated.

\bibliographystyle{siam}
\bibliography{tensor}

\end{document}